\theoremstyle{plain}
\newtheorem{lem}{Lemma}[section]
\newtheorem{thm}[lem]{Theorem}
\newtheorem{cor}[lem]{Corollary}
\theoremstyle{definition}
\newtheorem{defn}[lem]{Definition}
\newtheorem{example}[lem]{Example}
\theoremstyle{remark}
\newtheorem{question}{Question}[section]
\numberwithin{equation}{section}
\def\repeattheoremhelper#1#2#3#4#5{
  \theoremstyle{#1}
  \newtheorem*{#4}{#2 \ref{#3}}
  \begin{#4}
    #5
  \end{#4}
}
\def\repeattheorem#1#2{
  \repeattheoremhelper{plain}{#1}{#2}{repeat#2}{\csname state#2\endcsname}
}
\newcommand{\mc}[1]{\mathcal{#1}}
\newcommand{\mf}[1]{\mathfrak{#1}}
\newcommand{\eg}{{\it e.g.},}
\newcommand{\card}[1]{\lvert #1\rvert}
\newcommand{\al}{\aleph}
\newcommand{\af}{\alpha}
\newcommand{\bt}{\beta}
\newcommand{\gm}{\gamma}
\newcommand{\dl}{\delta}
\newcommand{\om}{\omega}
\newcommand{\ka}{\kappa}
\newcommand{\lm}{\lambda}
\newcommand{\oml}{\om_1}
\newcommand{\alo}{\al_0}
\newcommand{\eps}{\varepsilon}
\newcommand{\lapps}[1]{long $#1$-approximation sequence}
\newcommand{\lc}{\lm}
\newcommand{\lappsl}{\lapps{\lc}}
\newcommand{\lappso}{\lapps{\oml}}
\newcommand{\da}[1][]{\daleth_{#1}(\af)}
\newcommand{\bighcard}{\theta}
\newcommand{\bigh}{H(\bighcard)}
\newcommand{\bigwo}{\sqsubset_\bighcard}
\newcommand{\bighs}{\mf{H}}
\newcommand{\elemsub}{\prec}
\newcommand{\subh}{\elemsub\bighs}
\newcommand{\restrict}{\upharpoonright}
\newcommand{\inv}[1]{#1^{-1}}
\newcommand{\subreg}{\leq_{\mathrm{reg}}}
\newcommand{\trile}{\triangleleft}
\newcommand{\vn}{\varnothing}
\newcommand{\istst}{it suffices to show that}
\newcommand{\Istst}{It suffices to show that}
\newcommand{\istf}{it suffices to find}
\newcommand{\wma}{we may assume}
\newcommand{\wrt}{with respect to}
\newcommand{\suchthat}{\mathrel{}|\mathrel{}}
\newcommand{\Suchthat}{\mathrel{}\middle|\mathrel{}}
\newcommand{\dksfn}[2]{{$#2$}\nobreakdash-$\mathrm{SFN}_{#1}$}
\newcommand{\dkfn}[2]{{$#2$}\nobreakdash-$\mathrm{FN}_{<#1}$}
\DeclareMathOperator{\cf}{cf}
\DeclareMathOperator{\supp}{supp}
\DeclareMathOperator{\op}{op}
\DeclareMathOperator{\dom}{dom}
\DeclareMathOperator{\id}{id}
\DeclareMathOperator{\ran}{ran}
\DeclareMathOperator{\powset}{\mc{P}}
\DeclareMathOperator{\ult}{Ult}
\DeclareMathOperator{\clop}{Clop}
\newcommand{\tfae}{the following are equivalent}
\newcommand{\gen}[2][]{\left\langle #2\right\rangle_{#1}}
\newcommand{\overlap}{\gen{A_i\cap\bigcup_{j\not=i}A_j}}
\newcommand{\scepin}{\v S\v cepin}
\newcommand{\proves}{\vdash}
\DeclareMathOperator{\spowop}{SP}
\newcommand{\homeo}{\cong}
\newcommand{\imp}{\Rightarrow}
\newcommand{\weight}[1]{w\left(#1\right)}
\DeclareMathOperator{\Exp}{Exp}
\newcommand{\mb}{\mathbb}
\DeclareMathOperator{\interior}{int}
\DeclareMathOperator{\cl}{cl}
\DeclareFontFamily{U}{mathb}{\hyphenchar\font45}
\DeclareFontShape{U}{mathb}{m}{n}{
      <5> <6> <7> <8> <9> <10> gen * mathb
      <10.95> mathb10 <12> <14.4> <17.28> <20.74> <24.88> mathb12
      }{}
\DeclareSymbolFont{mathb}{U}{mathb}{m}{n}
\DeclareFontFamily{U}{mathx}{\hyphenchar\font45}
\DeclareFontShape{U}{mathx}{m}{n}{
      <5> <6> <7> <8> <9> <10>
      <10.95> <12> <14.4> <17.28> <20.74> <24.88>
      mathx10
      }{}
\DeclareSymbolFont{mathx}{U}{mathx}{m}{n}
\DeclareMathSymbol{\bigboxplus}{1}{mathx}{"D0}
\DeclareMathSymbol{\bigboxtimes}{1}{mathx}{"D2}
\DeclareMathAccent{\widecheck}{0}{mathx}{"71}
\newcommand{\bigcomm}{
  \mathop{
    \vcenter{
      \hbox{\oalign{\noalign{\kern-.3ex}\hfil$\vert$\hfil\cr
        \noalign{\kern-.7ex}
        $\smile$\cr\noalign{\kern-.3ex}}}
    }
  }  
}
\begin{document}

\title{Team games, hypergraph spaces, and projective Boolean algebras}
\dedicatory{In memory of Ken Kunen, incisive scholar and patient teacher.}
\author{David Milovich}
\email{david.milovich@welkinsciences.com}
\urladdr{http://dkmj.org}
\address{
  Welkin Sciences\\
  2 N Nevada Ave, Suite 1280\\
  Colorado Springs, CO 80903, USA
}
\date{Submitted June 30, 2021; revised Feb. 24, 2022}
\begin{abstract}
  We modify the game Fuchino, Koppelberg, and Shelah used
  to characterize the $\kappa$-Freese-Nation property
  for a given Boolean algebra $A$, replacing players I and II
  each with a team of $n$ players with limited information. 
  We show that $A$ is tightly $\kappa$-filtered exactly when team II has
  a winning strategy for every finite team size.
  
  Case $\kappa=\aleph_0$ characterizes projective Boolean algebras
  and, hence, Dugundji spaces.
  In terms of the open-open game of Daniels, Kunen, and Zhou,
  this characterization is a team version of ``very I-favorable.''
  We similarly characterize Cohen algebras
  in terms of a team version of I-favorability.
  
  If $A$ is the clopen algebra of the space of $n$-uniform hypergraphs
  on $\kappa^{+n}$ that avoid copies of $[n+1]^n$, then
  team II has a winning strategy for our modified FKS game
  for team size $n-1$ but not $n$.
  For $n\geq 3$, this algebra also answers a question of Geschke
  when combined with a locally ${<}\kappa$-sized characterization
  of tightly $\kappa$-filtered Boolean algebras that we prove.
  Case $\kappa=\aleph_0$ includes a locally finite characterization
  of projective Boolean algebras.
\end{abstract}

\subjclass[2010]{
  Primary:
  06E05, 
  03E75. 
  Secondary:
  18A30, 
  54C55, 
  03B05. 
}
\keywords{
  Boolean algebra,
  projective,
  tightly $\kappa$-filtered,
  Freese-Nation property,
  I-favorable,
  open-open game,
  Cohen algebra,
  Dugundji space,
  Davies tree,
  team game,
  uncoordinated strategy.
}

\maketitle

\subsection*{Outline}
In Section~\ref{secteamclubgame}, we define uncoordinated winning strategies
for team games and state a theorem that uses such strategies to characterize
projective Boolean algebras and their higher-cardinal generalization,
Geschke's tightly $\ka$-filtered Boolean algebras.
We also state a theorem saying that a natural family of hypergraph spaces
witness that the preceding theorem is optimal \wrt\ team size.
These two theorems also include statements that answer a question
of Geschke about tightly $\ka$-filtered Boolean algebras.

In Section~\ref{secprojteam}, we use elementary submodel techniques,
including Davies trees, to prove (strengthenings of) half of each of
the above two theorems.

In Section~\ref{secopenopen}, we introduce a team version of the
open-open game of Daniels, Kunen, and Zhou and use the techniques
of Section~\ref{secprojteam} to characterize Cohen algebras in terms
of uncoordinated winning strategies. We also explain the connections
between this paper's results and Daniels, Kunen, and Zhou's concepts of
I-favorable and very I-favorable spaces.

In Section~\ref{seclocfin}, we complete the proofs of the two theorems
stated in Section~\ref{secteamclubgame}. As a corollary, we characterize
Dugundji spaces in terms of inverse limits of maps between finite discrete
spaces. These results are proved using higher-arity forms of
conditional independence between tuples of Boolean subalgebras.
We also obtain some purely finitary consequences related to these
conditional independence concepts, including a higher-arity version of
the Craig interpolation property for propositional logic.

We conclude with some open questions in Section~\ref{secquestions}.

\section{Teams, clubs, and Boolean algebras}\label{secteamclubgame}

\subsection{Uncoordinated teams}
A two-player game where two players alternately play $n$-tuples can be 
interpreted as a game between two teams of $n$ players each.
Under this interpretation, it makes sense to ask if a team $T$ has
an \emph{uncoordinated} winning strategy in which the $m$th
player of $T$ knows the previous moves of the $m$th player of
the opposing team but is ignorant of all other players' moves.
(Note that an uncoordinated winning strategy must win even
against a coordinated opposing team.)
For a quick but nontrivial example,
consider the following variant of the Banach-Mazur game.

\begin{example}
Fix a subset $E$ of a product $X=\prod_{m=1}^nX_m$ of topological spaces.
In round $k$, the $m$th player of I plays a nonempty open $U_{m,k}\subset X_m$
and then the $m$th player of II plays a nonempty open $V_{m,k}\subset X_m$.
These opens are required to be nested: $U_{m,k}\supset V_{m,k}\supset U_{m,k+1}$.
After $\om$ rounds, team II wins iff $\bigcap_{k<\om}\prod_{m=1}^nV_{m,k}\subset E$.
Then II has a winning strategy iff $E$ is comeager in $X$.
In contrast, II has an uncoordinated winning strategy iff $E$ 
contains a product $\prod_{m=1}^n B_m$ of comeager sets.
In particular, if $n\geq 2$, $X=\mb{R}^n$, 
and $\vec x\in E$ iff $x_1+\cdots+x_n$ is irrational,
then II has a winning strategy but no uncoordinated winning strategy
because $Y+Z=\mb{R}$ for all comeager $Y,Z\subset\mb{R}$.
\end{example}

This work's applications of uncoordinated strategies 
include new characterizations of projective Boolean algebras and 
of Geschke's generalization of these, tightly $\ka$-filtered Boolean algebras.
In particular, we will define a team game $\mc{G}(A,\ka,\tau)$ of length $\ka$
for arbitrary Boolean algebras $A$ and team sizes $\tau$
such that $A$ is tightly $\ka$-filtered iff II has an uncoordinated
winning strategy for all finite $\tau$ satisfying $\ka^{+\tau}\leq\card{A}$.
Moreover, for each $n\in[1,\om)$, there a Boolean algebra $A$ of
size $\ka^{+(n+1)}$ such that II has an uncoordinated winning strategy
for $\mc{G}(A,\ka,n)$ but not for $\mc{G}(A,\ka,n+1)$.

\begin{defn}\ 
\begin{itemize}
\item Given a cardinal $\tau$ and an ordinal $\eta$, in
a \emph{team game} of size $\tau$ and length $\eta$
is a two-player game with $\eta$ rounds such that in each round
player I must play a $\tau$-sequence and then player II must play a $\tau$-sequence.
We will call I and II \emph{teams} instead of players.
\item For each $\alpha<\tau$, $\beta<\eta$, and $T\in\{\mathrm{I},\mathrm{II}\}$, 
by \emph{play $\beta$ of player $\alpha$ of $T$} we mean
the $\alpha$th coordinate of the $\beta$th play of $T$.
\item For each $T\in\{\mathrm{I},\mathrm{II}\}$, a strategy $\sigma$ for $T$
is \emph{uncoordinated} if, for all $\alpha<\tau$ and all partial play histories 
$h,h'\in\dom(\sigma)$, if $\dom(h)=\dom(h')$ and 
$h(\beta)(\alpha)=h'(\beta)(\alpha)$ for all $\beta\in\dom(h)$, then
$\sigma(h)(\alpha)=\sigma(h')(\alpha)$.
\end{itemize}
\end{defn}

\subsection{Projective Boolean algebras and the Freese-Nation property}
\begin{defn}
In a given category:
\begin{itemize}
\item An object $P$ is \emph{projective} iff
for every morphism $m\colon P\to Q$ and epimorphism $q\colon O\to Q$
we may factor $q$ as $m\circ p$ for some $p\colon O\to P$.
\item A \emph{retraction} is a morphism $r$ that has a right inverse $e$.
An object $R$ is a \emph{retract} of an object $O$ iff there is a retraction from $O$ to $R$.
\end{itemize}
\end{defn}
$$\xymatrix{ & P\ar[d]^m &&&& R\\ 
O\ar@{-->}[ru]^p\ar@{->>}[r]_q & Q &&& R\ar[r]_e\ar[ru]^\id&O\ar[u]_r}$$ 

In various concrete categories from algebra, including 
the category Bool of Boolean algebras and Boolean homomorphisms,
an algebra $P$ is projective iff every homomorphism $h$
from $P$ to a quotient algebra $A/I$ is of the form $g/I$
for some homomorphism $g\colon P\to A$. Moreover, for
Bool and other categories of algebras with enough free algebras,
$P$ is projective iff it is the retract of a free algebra.
In particular, a free Boolean algebra is projective and 
a projective Boolean algebra is ccc because it is a retract and, 
hence, subalgebra of some free Boolean algebra.
For additional background information about projective
Boolean algebras, we recommend \cite[Ch. 1]{hs} or \cite{koppproj}.

In topology, the \emph{Dugundji spaces} (a.k.a., \emph{absolute extensors of dimension zero}
or AE(0) spaces) are exactly the Stone spaces\footnote{
  That is, zero-dimensional compact Hausdorff spaces, also known as Boolean spaces.}
with projective clopen algebras and, hence, exactly the retracts of powers of 2.
In particular, all countable Boolean algebras are projective because
every closed subset of the Cantor set is easily seen to be a retract of the Cantor set.

The eponymous \emph{Freese-Nation property} (FN) was introduced
as part of a characterization of projective lattices.\,\cite{freesenation}

\begin{defn}
A poset $P$ has the \emph{FN} iff there is a map $f\colon P\to[P]^{<\alo}$
such that for every nonempty interval $[x,y]$, the set
$f(x)\cap f(y)\cap [x,y]$ is also nonempty. 
\end{defn}

In words, the Freese-Nation property demands that we associate
to every point in a poset a finite cloud such that the finite clouds
of any two comparable points intersect at some interpolant.
For additional background about the Freese-Nation property, we recommend \cite[Chs. 2-3]{hs}.

It is easily shown that free Boolean algebras have the FN
and that retracts preserve the FN. So, projectivity implies the FN.
However, the converse is false and interestingly so, especially from
the topological point of view. \scepin\ isolated the class of \emph{openly generated} 
(a.k.a., $\ka$-metrizable\footnote{Here, $\ka$ is just a letter, not a parameter.})
compact Hausdorff spaces and proved many interesting results about this class
and the class of Dugundji spaces.\,\cite{sckmet,scfunct,sctoplim}
In their book \cite{hs}, Heindorf and Shapiro observed that for Stone spaces
open generation is exactly having a clopen algebra with the FN.
Heindorf and Shapiro's book also presented the zero-dimensional case of
several of \scepin's results in algebraic terms. In particular, we note the following results,
stating them in a mix of topological and algebraic language.
\begin{itemize}
\item A Boolean algebra is free iff it projective and all its ultrafilters have equal character.
\item A Boolean algebra of size at most $\al_1$ is projective iff it has the FN.
\item Any Boolean algebra with the FN is ccc.
\item The Vietoris hyperspace operator $\Exp$ and symmetric $n$th power\footnote{
The symmetric $n$th power of a space $X$ is the space of multisets of points of $X$ 
with cardinality $n$. In other words, it is the quotient of the product space $X^n$
induced by identifying a tuple $f\colon n\to X$ with $f\circ\sigma$ for all permutations $\sigma$ of $n$.}
operator $\spowop^n$ preserve open generation.\,\cite{sckmet}
\item For all $\lm\geq\om_2$ and $n\geq 2$, the clopen algebras of 
$\Exp(2^\lm)$ and $\spowop^n(2^\lm)$ have the FN but are not projective.
(Moreover, Shapiro \cite{shapiro} showed that $\Exp(2^\lm)$ is not even a continuous image of
a power of 2.)
\end{itemize}

One striking consequence of the above results is that 
$\spowop^n\left(2^{\oml}\right)$ is homeomorphic to $2^{\oml}$ for all $n$ but
$\spowop^n\left(2^{\om_2}\right)$ is not homeomorphic to $2^{\om_2}$ for $n\geq 2$.
\scepin\ showed that moreover
$\spowop^n\left(2^{\om_2}\right)\not\homeo\spowop^m\left(2^{\om_2}\right)$
for $n>m\geq 1$. Part of my motivation for this work was to find an analogous
symmetry breaking at weight $\al_3$ or higher. Theorems~\ref{hypergraphsame}
and \ref{hypergraphdifference} are the kind of thing I was looking for.

\subsection{The FKS game}
Fuchino, Koppelberg, and Shelah \cite{fks} generalized the FN
by adding a cardinal parameter $\ka$.
For regular $\ka$, they characterized the $\ka$-FN in terms of a game of length $\ka$.
Though their characterization is for all posets, we will only state it for Boolean algebras.

\begin{defn}
Given an infinite cardinal $\ka$,
a poset $P$ has the \emph{$\ka$-FN} iff there is a map $f\colon P\to[P]^{<\ka}$
such that for every nonempty interval $[x,y]$, the set
$f(x)\cap f(y)\cap [x,y]$ is also nonempty. 
\end{defn}

The $\al_1$-FN, also known as the \emph{weak Freese-Nation property} or \emph{WFN}
is of particular interest to set-theorists because many of the properties of the Cohen model
follow from the fact that $\powset(\om)$ has the WFN in that model.\,\cite{fks}

We now recall some essential results about the $\ka$-FN.

\begin{defn}
Given an infinite cardinal $\ka$, a set $S$, and $\mc{E}\subset[S]^{\leq\ka}$:
\begin{itemize}
\item $\mc{E}$ is \emph{closed} if it closed \wrt\
unions of ascending sequences of length at most $\ka$.
\item $\mc{E}$ is a \emph{club} if it closed and cofinal in $[S]^{\leq\ka}$
(ordered by inclusion).
\item In the \emph{FKS game for $\mc{E}$},
players I and II respectively choose $X_\af,Y_\af\in[S]^{<\ka}$
such that $X_\af\subset Y_\af\subset X_\bt$ for all $\af<\bt<\ka$.
II wins iff $\bigcup\vec{Y}\in\mc{E}$.
\end{itemize}
\end{defn}

\begin{lem}
II has a winning strategy for the FKS game for $\mc{E}$ iff
$\mc{E}$ contains a club.
\end{lem}
\begin{proof}
See the proof of Lemma~\ref{geschketogame}.
\end{proof}

\begin{defn}
Given an infinite cardinal $\ka$ and Boolean algebras $A,B$:
\begin{itemize}
\item We say an injective Boolean homomorphism $e\colon A\to B$ 
is a \emph{$\ka$-embedding} and write $e\colon A\leq_\ka B$
iff for every $y\in B$ the set $\{x\in A\suchthat e(x)\leq y\}$
has cofinality less than $\ka$.
\item If $A$ is a subalgebra of $B$, then 
we say $A$ is a \emph{$\ka$-subalgebra} of $B$ and write $A\leq_\ka B$
iff $\id_A\colon A\leq_\ka B$ where $\id_A=(x\mapsto x)_{x\in A}$.
\end{itemize}
\end{defn}

Thus, $A\leq_\ka B$ iff every ideal of the form $\{x\in A\suchthat x\leq_B y\}$
has a cofinal subset of size less than $\ka$. Therefore, $A\leq_{\alo} B$ iff
every set of the form $\{x\in A\suchthat x\leq_B y\}$ has a maximum element.\footnote{
The special case $A\leq_{\alo}B$ is more commonly denoted by
  $A\leq_{\text{rc}}B$. (See, \eg\ \cite{hs}.) The ``rc'' stands for
  \emph{relatively complete}.}

\begin{thm}[\cite{fks}]
Given a regular infinite cardinal $\ka$, a Boolean algebra $B$ has the $\ka$-FN 
iff II has a winning strategy in the FKS game for
$\{A\in[B]^{\leq\ka}\suchthat A\leq_\ka B\}$.
\end{thm}

Also in \cite{fks}, many facts about the FN are generalized to the $\ka$-FN. In particular:

\begin{thm}[\cite{fks}]
Given a regular infinite cardinal $\ka$ and a Boolean $A$:
\begin{itemize}
\item If $\card{A}\leq\ka$, then $A$ has the $\ka$-FN.
\item If $A$ has the $\ka$-FN, then $A$ has the $\ka$-cc.
\item If $A$ has the $\ka$-FN, then so do all retracts of $A$.
\end{itemize}
\end{thm}

\subsection{FKS games for teams}

\begin{defn}\label{deftight}
Given a Boolean algebra $A$:
\begin{itemize}
\item For each $S\subset A$, $\gen{S}$ is the Boolean subalgebra generated by $S$.
\item (Geschke \cite{geschke}) 
Given a regular infinite cardinal $\ka$, $A$ is \emph{tightly $\ka$-filtered}
iff there is a transfinite sequence $(x_\af)_{\af<\eta}$ such that 
$A=\gen{\{x_\af\suchthat\af<\eta\}}$ and, for all $\af<\eta$,
$$\gen{\{x_\bt\suchthat\bt<\af\}}\leq_\ka\gen{\{x_\bt\suchthat\bt<\af+1\}}.$$
\end{itemize}
\end{defn}

Geschke's motivation for the above definition is the following result of Koppelberg.

\begin{lem}[{\cite[2.8]{koppproj}}]
A Boolean algebra is projective iff it is tightly $\alo$-filtered.
\end{lem}

Geschke generalized several results of \scepin. In particular:

\begin{thm}[\cite{geschke}]
Given a regular infinite cardinal $\ka$ and a Boolean $A$:
\begin{itemize}
\item If $A$ is tightly $\ka$-filtered, then $A$ has the $\ka$-FN.
\item If $A$ has the $\ka$-FN and has size at most $\ka^+$, then $A$ is tightly $\ka$-filtered.
\item If $A$ is tightly $\ka$-filtered, then so are all retracts of $A$.
\item Given an infinite cardinal $\lm$, the clopen algebra of the symmetric square
$\spowop^2(2^\lm)$ is tightly $\ka$-filtered iff $\lm\leq\ka^+$.
\end{itemize}
\end{thm}

To prove that $\clop\left(\spowop^2\left(2^{\ka^{++}}\right)\right)$
is not tightly $\ka$-filtered, Geschke used the following lemma.

\begin{defn}\label{gmapdef}
Given a Boolean algebras $A$, a regular infinite cardinal $\ka$, and 
a nonzero cardinal $\tau$, a \emph{$(\ka,\tau)$-Geschke map} for $A$ is a map 
$f\colon A\to[A]^{<\ka}$ such that $\gen{\bigcup\vec B}\leq_\ka A$
for all $\tau$-sequences $\vec B$ of $f$-closed subalgebras of $A$.
\end{defn}

Observe that if $\ka\leq\ka'$ and $\tau\geq\tau'$, then any
$(\ka,\tau)$-Geschke map is also a $(\ka',\tau')$-Geschke map.

\begin{lem}{\cite[Cor. 2.7]{geschke}}\label{geschkeunion}
  If $A$ is tightly $\ka$-filtered, then it has a $(\ka,2)$-Geschke map.
\end{lem}
\begin{proof}
  The essential step of Geschke's proof is to state that,
  by his Theorems 2.4 and 2.5, every tightly $\ka$-filtered $A$
  is the union of some $\mc{C}\subset[A]^{<\ka}$ of such that
  $\gen{\bigcup\mc{B}}\leq_\ka A$ for every $\mc{B}\subset\mc{C}$.
  For uncountable $\ka$, his Theorem 2.5(vii) provides exactly such a family
  $\mc{C}$. But for $\ka=\alo$, there is a mistake:
  the family $\mc{C}$ provided by his Theorem 2.4(vi)
  has elements that are merely countable, not finite. Fortunately,
  Geschke's proof can be repaired by observing that his proof of his Theorem~2.5
  actually works for $\ka=\alo$ too because if $\dl$ is an ordinal,
  $f\colon\dl\to[\dl]^{<\alo}$, and $f(\af)\subset\af$ for all $\af<\dl$,
  then every finite $s\subset\dl$ has a finite $f$-closure,
  a fact easily proved by induction on $\max(s)$.
  For a more self-contained argument, see the proofs of Lemmas \ref{sfntogeschke}
  and \ref{tighttosfn}.
\end{proof}
Geschke then raised the question of whether the converse of this lemma is true.
\begin{quote}
  It would be interesting to know whether the existence of a function as
  in Corollary 2.7 already characterizes tight $\ka$-filteredness.\footnote{
    Actually, Geschke required of the function $f$ in Definition~\ref{gmapdef}
    the stronger property that $\gen{\bigcup\vec B}\leq_\ka A$
    for all $\tau$-sequences $\vec B$ of $f$-closed \emph{subsets} of $A$.
    I interpret Geschke to intend \emph{subalgebras}, not \emph{subsets},
    for three reasons.
    First, Geschke's definition is strange if interpreted literally
    because a $(\ka,1)$-Geschke map $f$ would also be a $(\ka,\tau)$-Geschke map
    for all $\tau$ because any union of $f$-closed sets is itself an $f$-closed set.
    Second, such an $f$ immediately implies tight $\ka$-filteredness
    by \cite[Thm. 2.4(vi)]{geschke} and \cite[Thm. 2.5(viii)]{geschke},
    giving Geschke's question a trivial answer.
    Third, whenever Geschke uses a $(\ka,2)$-Geschke map $f$,
    the relevant $f$-closed subsets are also subalgebras.}
\end{quote}

We will eventually negatively answer Geschke's question by showing that
there is a Boolean algebra of size $\ka^{+3}$ that has something better
than a $(\ka,2)$-Geschke map
yet is not $\ka$-tightly filtered because there is no uncoordinated
winning strategy for a certain game with team size 3.
On the other hand, we also will show that having $(\ka,n)$-Geschke maps
for all $0<n<\om$ does characterize tight $\ka$-filteredness.

\begin{defn}
  Given a Boolean algebra $A$, an infinite regular cardinal $\ka$,
  and a nonzero team size $\tau$,
  define the \emph{FKS team game} $\mc{G}(A,\ka,\tau)$ as follows.
  There are $\ka$ rounds. In round $\af$, team I plays $(X_{\mu,\af})_{\mu<\tau}$
  and then II plays $(Y_{\mu,\af})_{\mu<\tau}$
  such that $X_{\mu,\af},Y_{\mu,\af}\in[A]^{<\ka}$
  and $\bigcup_{\bt<\af}Y_{\mu,\bt}\subset X_{\mu,\af}\subset Y_{\mu,\af}$
  for all $\mu<\tau$.
  II wins iff $\gen{\bigcup_{\mu<\tau}\bigcup_{\af<\ka}Y_{\mu,\af}}\leq_\ka A$.
\end{defn}

\begin{lem}\label{geschketogame}
Given $(A,\ka,\tau)$ as above, we have
\eqref{taukappafunc}$\Rightarrow$\eqref{taukappaclub}$\Rightarrow$\eqref{taukappagame}
for the following statements. If $\tau\leq\ka$, then we also have 
\eqref{taukappagame}$\Rightarrow$\eqref{taukappaclub}.
\begin{enumerate}
\item\label{taukappafunc} There is a $(\ka,\tau)$-Geschke map $f$ for $A$.
\item\label{taukappaclub} There is a club $\mc{E}\subset[A]^{\leq\ka}$ 
such that $\gen{\bigcup\mc{B}}\leq_\ka A$ for all $\mc{B}\in[\mc{E}]^{\leq\tau}$.
\item\label{taukappagame} II has an uncoordinated winning strategy $\sigma$
  for $\mc{G}(A,\ka,\tau)$.
\end{enumerate}
\end{lem}
\begin{proof}\ 
\begin{itemize}
\item[\eqref{taukappafunc}$\Rightarrow$\eqref{taukappaclub}]
Let $\mc{E}$ be the set of all $\ka$-sized $f$-closed subalgebras of $A$.
\item[\eqref{taukappaclub}$\Rightarrow$\eqref{taukappagame}] 
For each $S\in[A]^{\leq\ka}$, fix $E(S),g(S)$ such that $S\subset E(S)\in\mc{E}$ 
and $g(S)$ maps $\ka$ onto $E(S)$. Then player $\mu$ of II can ensure
$\bigcup_{\af<\ka}Y_{\mu,\af}\in\mc{E}$ by
playing according to the following recursion.
\begin{align*}
Y_{\mu,\af}&=X_{\mu,\af}\cup\{g(Z_{\mu,\bt})(\gm)\suchthat\bt,\gm<\af\}\\
Z_{\mu,\af}&=E\left(X_{\mu,\af}\cup\bigcup_{\bt<\af}Z_{\mu,\bt}\right)
\end{align*}
\item[\eqref{taukappagame}$\Rightarrow$\eqref{taukappaclub}]
Let $\mc{D}=\bigcap_{\mu<\tau}\mc{D}_\mu$ where $\mc{D}_\mu$
is the set of all unions $\bigcup_{\af<\ka}Y_{\mu,\af}$ obtainable
by playing according to $\sigma$. Each $\mc{D}_\mu$ contains a club.
Therefore, $\mc{D}$ contains a club if $\tau\leq\ka$.\qedhere
\end{itemize}
\end{proof}

We will eventually prove the next two theorems
and thus settle Geschke's question.

\begin{thm}[Main Theorem]\label{mainthm}
Given a regular infinite cardinal $\ka$ and a Boolean algebra $A$, \tfae.
\begin{enumerate}
\item $A$ is tightly $\ka$-filtered.\label{tightkamainthm}
\item For every nonzero cardinal $\tau$, $A$ has a $(\ka,\tau)$-Geschke map.
\item For every finite nonzero ordinal $\tau$ satisfying $\ka^{+\tau}\leq\card{A}$, 
$A$ has a $(\ka,\tau)$-Geschke map.
\item For every nonzero cardinal $\tau$, 
II has an uncoordinated winning strategy for $\mc{G}(A,\ka,\tau)$.\label{addkaclub}
\item For every finite nonzero ordinal $\tau$ satisfying $\ka^{+\tau}\leq\card{A}$, 
II has an uncoordinated winning strategy for $\mc{G}(A,\ka,\tau)$.
\end{enumerate}
\end{thm}

\begin{cor}\label{mainprojcor}
Given a Boolean algebra $A$, \tfae.
\begin{enumerate}
\item $A$ is projective\label{projmainprojcor}
\item For every nonzero cardinal $\tau$, $A$ has an $(\alo,\tau)$-Geschke map.
\item For every finite nonzero ordinal $\tau$ satisfying $\al_\tau\leq\card{A}$, 
$A$ has an $(\alo,\tau)$-Geschke map.
\item For every nonzero cardinal $\tau$, 
II has an uncoordinated winning strategy for $\mc{G}(A,\alo,\tau)$.\label{addclub}
\item For every finite nonzero ordinal $\tau$ satisfying $\al_\tau\leq\card{A}$, 
II has an uncoordinated winning strategy for $\mc{G}(A,\alo,\tau)$.
\end{enumerate}
\end{cor}

\begin{thm}[Main Example]
Given a regular infinite cardinal $\ka$ and $0<d<\om$,
let $C_{\ka,d}$ be the clopen algebra of the space of $d$-uniform hypergraphs
on $\ka^{+d}$ that avoid copies of $[d+1]^d$.
Then $C_{\ka,d}$ has an $(\alo,d-1)$-Geschke map but II does not have 
an uncoordinated winning strategy for $\mc{G}(C_{\ka,d},\ka,d)$.
\end{thm}

\begin{cor}
$C_{\ka,3}$ has a $(\ka,2)$-Geschke map but is not $\ka$-tightly filtered.
\end{cor}


\section{Proving team-game characterizations of tight $\ka$-filteredness}
\label{secprojteam}
In this section, we use team games and elementary submodel
techniques to prove half of the main theorem and half of the main example.

\subsection{Elementary submodel techniques}
\begin{defn}
  Given $\bighcard$ a regular uncountable cardinal,
  let $\bighs$ denote the first-order structure
  $(\bigh,\in,\bigwo)$ where $\bigh$ is the
  set of all sets with transitive closure smaller than $\bighcard$
  and $\bigwo$ is a well-ordering of $\bigh$.
  Given a set $M$, by $M\subh$ we mean that
  $M$ is the universe of an elementary substructure
  of $\bighs$.\footnote{
    That is, for each first-order sentence $\varphi$
    using non-logical symbols $\in$, $\bigwo$, and
    parameters from $M$, the sentence $\varphi$ is
    true in $(M,{\in}\cap M^2,{\bigwo}\cap M^2)$ iff it is true in $\bighs$.}
  Our convention is that $\bighcard$ is always implicitly chosen
  to be a regular uncountable cardinal sufficiently large for the argument at hand.
\end{defn}

We include a well-ordering in $\bighs$ so that $M,N\subh$ implies $M\cap N\subh$.

We will need the following generalization of Davies trees \cite{davies}
introduced by the author in \cite{msfn}.

\begin{defn}
  Given an ordinal $\eta$ and a regular uncountable cardinal $\lm$,
  a transfinite sequence $(M_\af)_{\af<\eta}$ is called
  a \emph{\lappsl} iff
  \begin{enumerate}
  \item $\card{M_\af}<\lm$,
  \item $[\bigh]^{<\lm}\cap M_\af\subset[M_\af]^{<\lm}$,
  \item $M_\af\subh$, and
  \item $(M_\bt)_{\bt<\af}\in M_\af$.
  \end{enumerate}
\end{defn}
Standard closing-off arguments show that every $x\in\bigh$
is an element of some $M_0$ satisfying (1), (2), and (3).
Therefore, we can extend any \lappsl\ $x=\vec M\in\bigh$ to a longer \lappsl.
Hence, for any $x\in\bigh$ and $\eta\leq\bighcard$, there is
a \lappsl\ $(M_\af)_{\af<\eta}$ with $x\in M_0$.

\begin{lem}[{\cite[Lem. 2.2]{msfn}}]
  Given a \lappsl\ $\vec M$, \tfae.
  \begin{itemize}
  \item $M_\bt\subset M_\af$
  \item $M_\bt\in M_\af\cup\{M_\af\}$
  \item $\bt\in(\af+1)\cap M_\af$
  \end{itemize}
\end{lem}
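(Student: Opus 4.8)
The plan is to prove the three conditions equivalent via the cyclic chain (third)$\imp$(second)$\imp$(first)$\imp$(third), after recording the one observation that drives everything: for every $\gm<\eta$ one has $\gm\in M_\gm$. Indeed, condition~\eqref{lappsretro} of Definition~\ref{lappsdef} places $\vec M\restrict\gm=(M_\dl)_{\dl<\gm}$ in $M_\gm$, and since $M_\gm\subh$ by~\eqref{lappselemsub}, the substructure $M_\gm$ is closed under the definable operation $\dom$; as $\dom(\vec M\restrict\gm)=\gm$, this yields $\gm\in M_\gm$.

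For (third)$\imp$(second), assume $\bt\in(\af+1)\cap M_\af$. If $\bt=\af$ then $M_\bt=M_\af\in\{M_\af\}$ and we are done, so suppose $\bt<\af$. Then $\bt$ belongs both to $M_\af$ and to $\dom(\vec M\restrict\af)=\af$; since $\vec M\restrict\af\in M_\af$ by~\eqref{lappsretro} and $M_\af$ is closed under function application, $M_\bt=(\vec M\restrict\af)(\bt)\in M_\af$. For (second)$\imp$(first), the case $M_\bt=M_\af$ is immediate, so suppose $M_\bt\in M_\af$. Here I would invoke the closure condition~\eqref{lappsclosure}: because $M_\bt\subset\bigh$ and $\card{M_\bt}<\lc$, we have $M_\bt\in[\bigh]^{<\lc}$, hence $M_\bt\in[\bigh]^{<\lc}\cap M_\af\subset[M_\af]^{<\lc}$, which is to say $M_\bt\subset M_\af$.

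The only direction requiring a genuine idea is (first)$\imp$(third). Assume $M_\bt\subset M_\af$. The preliminary observation gives $\bt\in M_\bt\subset M_\af$, so $\bt\in M_\af$ is settled and it remains to show $\bt\le\af$. Suppose instead that $\bt>\af$. The key move is to pull the entire initial segment into $M_\af$: from $\vec M\restrict\bt\in M_\bt\subset M_\af$ we get $\vec M\restrict\bt\in M_\af$, while $\af\in M_\af$ and $\af<\bt=\dom(\vec M\restrict\bt)$, so function application inside $M_\af$ produces $M_\af=(\vec M\restrict\bt)(\af)\in M_\af$, contradicting the Axiom of Foundation. Hence $\bt\le\af$, and with $\bt\in M_\af$ this is exactly $\bt\in(\af+1)\cap M_\af$. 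I expect this to be the main obstacle: the naive attempt to deduce $M_\af\in M_\bt$ directly from $\af<\bt$ fails, since a \lappsl\ need not satisfy $\af\in M_\bt$; the fix is to route through the hypothesis $M_\bt\subset M_\af$ so that the whole sequence $\vec M\restrict\bt$ lands in $M_\af$ before it is evaluated at $\af$.
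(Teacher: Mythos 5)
Your proof is correct and complete; all three implications check out, including the key use of Foundation in (first)$\imp$(third) and the use of closure condition~\eqref{lappsclosure} in (second)$\imp$(first). The paper itself gives no proof here, citing \cite[Lem.~2.2]{msfn} instead, and your argument is essentially the standard one from that reference (via $\gm\in M_\gm$, evaluation of the sequence $(M_\dl)_{\dl<\gm}$ inside an elementary substructure, and a Foundation contradiction), so there is nothing to flag.
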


\begin{cor}
  If $\vec M$ is a \lappsl, then $M_0\subset M_\af$ for all $\af$.
\end{cor}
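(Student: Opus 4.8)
The plan is to reduce everything to the Ordering Lemma (Lemma~\ref{lappsorder}). That lemma characterizes the inclusion $M_\bt\subset M_\af$ by the purely ordinal-theoretic condition $\bt\in(\af+1)\cap M_\af$, so the corollary collapses to verifying a single membership fact at the index $\bt=0$. Concretely, I would fix an arbitrary $\af<\eta$ (if $\eta=0$ there is nothing to prove, and the case $\af=0$ is handled uniformly below) and apply the equivalence of the first and third bullets of Lemma~\ref{lappsorder} with $\bt=0$: it suffices to show that $0\in(\af+1)\cap M_\af$.

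Next I would check the two memberships separately. That $0\in\af+1$ is immediate, since every successor ordinal contains $0$. For $0\in M_\af$, I would invoke requirement~\eqref{lappselemsub} of Definition~\ref{lappsdef}, namely $M_\af\subh$: the empty set is the unique $x$ satisfying the first-order formula $\forall y\,(y\notin x)$, so by elementarity $\vn=0\in M_\af$. Combining the two gives $0\in(\af+1)\cap M_\af$, and the Ordering Lemma then yields $M_0\subset M_\af$, as desired.

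I do not anticipate any genuine obstacle here; the corollary is essentially just the $\bt=0$ instance of the Ordering Lemma. The only point requiring a moment's care is the verification $0\in M_\af$, which is a routine elementarity argument, and the observation that the same computation covers the degenerate case $\af=0$ (where the condition reads $0\in 1\cap M_0$ and the conclusion is the trivial $M_0\subset M_0$). The conceptual content is simply that the earliest structure in a \lappsl\ sits inside every later one.
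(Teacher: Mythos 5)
Your proof is correct and is essentially the argument the paper intends: the corollary is stated without proof as an immediate consequence of the Ordering Lemma, and instantiating it at $\bt=0$ with the observation that $0\in(\af+1)\cap M_\af$ (the latter membership by elementarity) is exactly the intended reading.
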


The following is the fundamental lemma of \lappsl s.

\begin{lem}[{\cite[Lem. 2.4]{msfn}}]
  Given $\lm$ regular uncountable, there is a uniformly $\{\lm\}$-definable partition
  of every ordinal $\af$ into a finite number $\da[\lm]$ of intervals
  $(I_{\lm,i}(\af)\suchthat i<\da[\lm])$ such that, for every \lappsl\
  $(M_\af)_{\af<\eta}$, every $\af\leq\eta$, and every $i<\da[\lm]$, the set
  $\{M_\bt\suchthat\bt\in I_{\lm,i}(\af)\}$ is directed \wrt\ inclusion.
  Moreover, if $n\in[1,\om)$ and $\af<\lm^{+n}$, then $\da[\lm]\leq n$.
\end{lem}

\begin{defn}
  Given $\{M_\bt\suchthat\bt\in I_{\lm,i}(\af)\}$ as in the above lemma,
  let $M_{\af,i}$ denote its union.
\end{defn}
By the above lemma, $M_{\af,i}\subh$ and
$\bigcup_{\bt<\af}M_\bt=\bigcup_{i<\da[\lm]}M_{\af,i}$.
Therefore, at any given stage of a transfinite construction
where each step adds an object of size less than $\lm$,
we can collect everything constructed in prior stages
into finitely many structures with strong closure properties.
And for constructions of length $\lm^{+n}$,
``finitely'' improves to ``at most $n$.''

Unrelated to \lappsl s, we will also need the next lemma,
which essentially says that any club subset of $[\ka^{+n}]^{\leq\ka}$
contains $n$ elements in general position \wrt\ inclusion.

\begin{lem}\label{incomparable}
  Given $1\leq n<\om$ and infinite cardinals $\ka$ and $\mu$,
  \tfae.
  \begin{enumerate}
  \item\label{incompsize} $\mu\geq\ka^{+n}$.
  \item\label{incompcof}
    For each directed $\mc{D}\subset[\mu]^{\leq\ka}$ with union $\mu$,
    there exist $A_0,\ldots,A_{n-1}\in\mc{D}$ such that
    $\bigcap_{j\not=i}A_j\not\subset A_i$ for all $i<n$.
  \item\label{incompclub}
    For each club $\mc{E}\subset[\mu]^{\leq\ka}$,
    there exist $B_0,\ldots,B_{n-1}\in\mc{E}$ such that
    $\bigcap_{j\not=i}B_j\not\subset B_i$ for all $i<n$.
  \item\label{incompctbl}
    For each $x\in\bigh$, there exist 
    $M_0,\ldots,M_{n-1}\subh$ such that
    $\card{M_i}=\ka\subset M_i$, $x\in M_i$, and
    $\mu\cap\bigcap_{j\not=i}M_j\not\subset M_i$ for all $i<n$.
  \item\label{incompsub}
    For each $x\in\bigh$, there exist
    $M_0,\ldots,M_{n-1}\subh$ such that
    $\ka\subset M_i$, $x\in M_i$, and
    $\mu\cap\bigcap_{j\not=i}M_j\not\subset M_i$ for all $i<n$.
  \end{enumerate}
\end{lem}

\begin{lem}\label{chainplus}
  If $\rho$ is a cardinal, $M,N\subh$, and
  $\rho\cap M\subset N$, then
  $\rho^+\cap M\cap N$ is downward closed in $\rho^+\cap M$.
\end{lem}
\begin{proof}
  Given $\af\in\rho^+\cap M\cap N$,
  let $f$ be the $\bigwo$-least
  surjection from $\card{\af}$ to $\af$.
  Given $\beta\in\af\cap M$, we have $\beta=f(\gamma)$
  for some $\gamma\in\rho\cap M$.
  But then $\gamma\in N$, and, therefore, $\beta\in N$.
\end{proof}

\begin{proof}[Proof of Lemma~\ref{incomparable}]
  \eqref{incompsize}$\imp$\eqref{incompsub}: 
  Given $0<i\leq n$ and $M_{n-1},\ldots,M_i\in\bigh$,
  choose $M_{i-1}\subh$ of size $\ka^{+i-1}$
  such that $x,M_{n-1},\ldots,M_i\in M_{i-1}$ 
  and $\ka^{+i-1}\subset M_{i-1}$.
  Next, given $M_{n-1},\ldots,M_0$ so constructed,
  suppose $s\subset n$. We will prove that
  $\card{D_m}\geq\ka^{+m}$ for all $m\leq n$ where
  \[D_m=\ka^{+n}\cap\bigcap_{m\leq i\not\in s}M_i
  \setminus\bigcup_{m\leq i\in s}M_i.\]
  If $m=n$, then $D_m=\ka^{+m}$. So,
  suppose $m<n$ and $\card{D_{m+1}}\geq\ka^{+m+1}$.
  If $m\in s$, then $D_m=D_{m+1}\setminus M_m$, which has
  size $\card{D_{m+1}}$ because $\card{M_m}<\card{D_{m+1}}$.
  If $m\not\in s$, then $D_m=D_{m+1}\cap M_m$, which
  has size $\ka^{+m}$ because $\ka^{+m}\subset M_m$
  and $M_m$ knows that $\card{D_{m+1}}\geq\ka^{+m}$.
  By (backwards) induction, $\card{D_0}\geq\ka$.
  Hence, for each $i<n$, letting $s=\{i\}$,
  we obtain $\mu\cap\bigcap_{j\not=i}M_j\not\subset M_i$.
  
  \eqref{incompsub}$\imp$\eqref{incompsize}:
  Given $\ka\subset M_i\subh$ for all $i<n$,
  we will inductively construct a permutation $\tau$ of $n$ such that
  \(\displaystyle\ka^{+n-1}\cap\bigcap_{i<n-1}M_{\tau(i)}
  \subset M_{\tau(n-1)}\).
  First, observe that $\{\ka^+\cap M_i\suchthat i<n\}$ is a chain.
  Given $m<n$, inductively assume that we have
  $\sigma_m\colon m\rightarrow n$ injective and
  \[\left\{\ka^{+m+1}\cap M_i\cap\bigcap_{j<m}M_{\sigma_m(j)}
  \Suchthat i\in n\setminus\ran(\sigma_m)\right\}\]
  a chain. Choose $i\in n\setminus\ran(\sigma_m)$
  so as to obtain the minimum of the above chain;
  extend $\sigma_m$ to $\sigma_{m+1}$ by declaring
  $\sigma_{m+1}(m)=i$.
  By Lemma~\ref{chainplus}, 
  \[\left\{\ka^{+m+2}\cap M_i\cap M_h\cap\bigcap_{j<m}M_{\sigma_{m}(j)}
  \Suchthat h\in n\setminus\ran(\sigma_{m+1})\right\}\]
  consists of downward closed subsets of
  $\ka^{+m+2}\cap M_i\cap\bigcap_{j<m}M_{\sigma_{m}(j)}$,
  and so is a chain.
  Having thus preserved our inductive hypothesis,
  we now declare $\tau=\sigma_n$. By construction,
  \[\ka^{+n-1}\cap M_{\tau(n-2)}\cap\bigcap_{j<n-2}M_{\tau(j)}\subset
  \ka^{+n-1}\cap M_{\tau(n-1)}\cap\bigcap_{j<n-2}M_{\tau(j)}.\]

  \eqref{incompsub}$\imp$\eqref{incompctbl}:
  Given $\vec M$ as in \eqref{incompsub},
  choose $N\subh$ such that $x,\vec M\in N$
  and $\card{N}=\ka\subset N$. Let $P_i=M_i\cap N$. Since
  $N$ knows that $E_i=\mu\cap\bigcap_{j\not=i}M_j\setminus M_i$
  is nonempty, $E_i$ intersects $N$. Thus,
  $\mu\cap\bigcap_{j\not=i}P_j\not\subset P_i$.

  \eqref{incompctbl}$\imp$\eqref{incompsub}: Trivial.

  \eqref{incompctbl}$\imp$\eqref{incompclub}:
  Given a club $\mc{E}\subset[\mu]^{\leq\ka}$,
  if $\mc{E}\in M\subh$ and $\card{M}=\ka\subset M$,
  then $\mu\cap M\in\mc{E}$.
  
  \eqref{incompclub}$\imp$\eqref{incompctbl}:
  The following set is club in $[\mu]^{\leq\ka}$.
  \[\{\mu\cap M\suchthat x\in M\subh
  \text{ and }\card{M}=\ka\subset M\}\]

  \eqref{incompcof}$\imp$\eqref{incompclub}: Trivial.

  \eqref{incompclub}$\imp$\eqref{incompcof}:
  Given a directed $\mc{D}\subset[\mu]^{\leq\ka}$
  with union $\mu$,
  let $\mc{E}$ be the set of all unions
  of $\ka$-sized subsets of $\mc{D}$,
  which is club in $[\mu]^{\leq\ka}$.
  Given $\vec B$ as in \eqref{incompclub}, choose
  $x_i\in\bigcap_{j\not=i}B_j\setminus B_i$ for all $i<n$.
  Then choose $\vec A\in\mc{D}^n$ such that
  $\{x_j\suchthat j\not=i\}\in A_i\subset B_i$.
\end{proof}

\subsection{Half of the main theorem} 
\begin{lem}\label{kahaydon}
  Given a regular infinite cardinal $\ka$ and a Boolean algebra $A$, \tfae.
  \begin{enumerate}
  \item $A$ is tightly $\ka$-filtered.
  \item For some transfinite sequence $(B_\af)_{\af<\eta}$ of subalgebras of $A$ each
    of size at most $\ka$, we have $A=\gen{\bigcup\vec B}$ and, for all $\af<\eta$,
    \[\gen{\bigcup_{\bt<\af}B_\bt}\leq_\ka\gen{\bigcup_{\bt<\af+1}B_\bt}.\]
  \end{enumerate}
\end{lem}
\begin{proof}
  By definition, (1)$\Rightarrow$(2). The converse for $\ka=\alo$ is part of
  \cite[Thm. 1.3.2]{hs}, though the original (topologically stated) result that
  (2) implies projectivity is due to Haydon \cite{haydon}.
  The converse for $\ka=\al_1$ is implied by Koppelberg's \cite[Thm. 2.5]{koppsigma}.
  As observed by Geschke in \cite{geschke}, Koppelberg's proof applies to all
  regular infinite $\ka$.
\end{proof}

\begin{thm}\label{tightgameclub}
  Given a regular infinite cardinal $\ka$ and a Boolean algebra $A$, \tfae.
  \begin{enumerate}
  \item $A$ is tightly $\ka$-filtered.
  \item There is a club $\mc{E}\subset[A]^{\leq\ka}$ such that
    $\gen{\bigcup\mc{B}}\leq_\ka A$ for all $\mc{B}\subset\mc{A}$.
  \item For every nonzero cardinal $\tau$, 
    II has an uncoordinated winning strategy for $\mc{G}(A,\ka,\tau)$.
  \item For every $n\in[1,\om)$ satisfying $\ka^{+n}\leq\card{A}$, 
    II has an uncoordinated winning strategy for $\mc{G}(A,\ka,n)$.
  \item There is a club $\mc{E}\subset[A]^{\leq\ka}$ such that $\gen{\bigcup\mc{B}}\leq_\ka A$
    for every finite $\mc{B}\subset\mc{E}$ satisfying $\ka^{+\card{\mc{B}}}\leq\card{A}$.
  \end{enumerate}
\end{thm}
\begin{proof}
  (1)$\Leftrightarrow$(2) follows from a stronger result\footnote{
    ``Club'' is improved to ``additive $\ka$-skeleton.''}
  proved by Geschke for $\ka>\alo$ \cite[Thm. 2.5(v)]{geschke} .
  For case $\ka=\alo$, see \cite[Thm. 1.3.2(4)]{hs},
  which was proved earlier in topological terms by \scepin\ \cite[Thm. 26]{scfunct}.

  (2)$\Rightarrow$(3) follows from Lemma~\ref{geschketogame}.
  (3)$\Rightarrow$(4) is trivial.
  (4)$\Leftrightarrow$(5) follows from Lemma~\ref{geschketogame} and the fact that
  a countable intersection of clubs is a club.  It remains to prove (5)$\Rightarrow$(1).
  
  Let $\mc{E}\subset[A]^{\leq\ka}$ be as in (5) and let $(M_\af)_{\af<\card{A}}$
  be a long $\ka^+$-approximation sequence with $A,\ka,\mc{E}\in M_0$.
  If $\ka>\alo$, then choose each $M_\af$ to be the union of
  a long $\ka$-approximation sequence $(N^{(\af)}_\bt)_{\bt<\ka}$.
  Given $\af<\card{A}$, let $n=\da[\ka^+]$,
  which satisfies $\ka^{+n}\leq\card{A}$.
  By Lemma~\ref{kahaydon}, \istst\ $B\leq_\ka\gen{B\cup C}$ where
  $B=\gen{A\cap\bigcup_{i<n}M_{\af,i}}$ and $C=A\cap M_\af$.
  Therefore, it is enough to prove the stronger claim $B\leq_\ka A$.
  
  Suppose that $x\in A\cap M_\af$ and $I=B\cap{\downarrow}x$.
  By elementarity, \istst\ $M_\af\models\cf(I)<\ka$.
  Because $A\cap M_{\af,i}\cap M_\af\in\mc{E}$ for each $i<n$,
  we have $\cf(I\cap M_\af)<\ka$.
  So, if $\ka=\alo$, then $M_\af\models\exists\max(I)$.
  On the other hand, if $\ka\geq\al_1$, then, for some $\bt<\ka$,
  $M_\af$ satisfies ``$N^{(\af)}_\bt\cap I$ is cofinal in $I$.''
  Thus, $M_\af\models\cf(I)<\ka$ in both cases.
\end{proof}

\subsection{Half of the main example}
\begin{defn}\label{hypergraphdef}
  Given $0<d<n<\lm$, let $X_{\lm,d,n}\subset\powset\left([\lm]^d\right)$
  consist of all $\Gamma\subset[\lm]^d$ such that $[\sigma]^d\not\subset\Gamma$
  for all $\sigma\in[\lm]^n$. For each $\tau\in[\lm]^d$, let
  \[f(\tau)=f_{\lm,d,n}(\tau)=\{\Gamma\in X_{\lm,d,n}\suchthat\tau\in\Gamma\},\]
  and let $C_{\lm,d,n}$ be the Boolean closure of
  $f[[\lm]^d]$ in $\powset(X_{\lm,d,n})$. Topologize $X_{\lm,d,n}$
  by declaring $C_{\lm,d,n}$ to be a base of open sets.
  If $\ka$ is a regular infinite cardinal, then let $C_{\ka,d}=C_{\ka^{+d},d,d+1}$.
\end{defn}

\begin{lem}\label{trivialcliquecover}
  Given a cardinal $\mu$ and sets $a$ and $b_i$ for $i<\mu$ such that
  $\card{a}\geq\mu$ and $[a]^\mu\subset\bigcup_i[b_i]^\mu$,
  there exists $k<\mu$ such that $a\subset b_k$.
\end{lem}
\begin{proof}
  We prove the contrapositive. Suppose $\gm_i\in a\setminus b_i$ for each $i$.
  Extend $\{\gm_i\suchthat i<\mu\}$ to some $c\in[a]^\mu$.
  Then $c$ witnesses that $[a]^\mu\not\subset\bigcup_i[b_i]^\mu$.
\end{proof}

\begin{thm}\label{hypergraphbad}
  Given $1\leq d<n<\om\leq\ka<\ka^{+d}\leq\lm$ and
  a club $\mc{E}\subset[C_{\lm,d,n}]^{\leq\ka}$, there is $\mc{B}\in[\mc{E}]^d$
  such that $\gen{\bigcup\mc{B}}\not\leq_\ka C_{\lm,d,n}$.
\end{thm}
\begin{proof}
  By Lemma~\ref{incomparable}, there exist $M_0,\ldots,M_{d-1}\subh$
  such that $\ka\cup\{\mc{E},f\}\subset M_i$, $\card{M_i}=\ka$, and
  $\lm\cap\bigcap_{h\not=i}M_h\not\subset M_i$ (where $\bigcap\vn=\bigh$).
  \Istst\ $D\not\leq_\ka C_{\lm,d,n}$ where $D=\gen{C_{\lm,d,n}\cap\bigcup\vec M}$.
  For each $i<d$, choose finite nonempty sets
  \[\rho_i\subset\lm\cap\left(\bigcap_{h\not=i}M_h\right)\setminus M_i\]
  such that $\card{\bigcup\vec\rho}=n-1$ (which is possible because $d<n$).
  Then define $\zeta=\bigcup\vec\rho$ and
  \begin{equation}\label{uwitnessbad}
    u=\bigwedge\left\{f(\sigma)\suchthat\sigma\in[\zeta]^d\right\}.
  \end{equation}
  Given $\gm<\ka$ and $u\leq v_\bt\in D$ for $\bt<\gm$,
  \istf\ $w\in D$ such that $u\perp w\not\perp v_\bt$ for all $\bt$.
  Towards this end, choose $\vec T\in\left([\lm]^{<\ka}\right)^d$ such that
  $T_i\subset M_i$ and $v_\bt\in E=\gen{\bigcup_if[[T_i]^d]}$ for all $\bt$.
  Then choose $\dl\in\ka\setminus\bigcup\vec T$,
  let $\eta=\zeta\cup\{\dl\}$, and let
  \begin{equation}\label{wwitnessbad}
    w=\bigwedge\left\{f(\tau)\suchthat\dl\in\tau\in[\eta]^d\right\}.
  \end{equation}
  For each $\tau$ that is as in \eqref{wwitnessbad}, we have
  $\card{\tau\setminus\{\dl\}}<d$ and, hence, some $i<d$ such that
  \[\tau\subset\{\dl\}\cup\bigcup_{h\neq i}\rho_h\subset M_i.\]
  Thus, $w\in D$.
  Moreover, $u\perp w$ because $\card{\eta}=n$ and each $\nu\in[\eta]^d$ is either
  some $\tau$ as in \eqref{wwitnessbad} or some $\sigma$ as in \eqref{uwitnessbad}.
  Fixing $\bt<\gm$, it remains only to show that $w\not\perp v_\bt$.

  Since $v_\bt\geq u>0$, we have $v_\bt\geq\bigvee_{l<L}v_{\bt,l}\geq u>0$
  where each $v_{\bt,l}$ satisfies $v_{\bt,l}\not\perp u$
  and is of the form $\bigwedge_\xi(-1)^{e_l(\xi)}f(\xi)$
  where $e_l$ is a finite partial function from $\bigcup_i[T_i]^d$ to 2.
  \Istst\ $w\not\perp v_{\bt,0}$. Seeking a contradiction,
  suppose that $w\perp v_{\bt,0}$. Let $\eps=e_0$.
  Because $\dl\not\in\bigcup\vec T$ and $\dom(\eps)\subset\bigcup_i[T_i]^d$,
  no $\tau\in\inv{\eps}[\{1\}]$ can be as in \eqref{wwitnessbad}.
  Therefore, some $\xi\in[\lm]^n$ satisfies
  \begin{equation}\label{wvperp}
    [\xi]^d\subset\inv{\eps}[\{0\}]
    \cup\left\{\tau\suchthat\dl\in\tau\in[\eta]^d\right\}
    \subset\bigcup_{i<d}[M_i]^d.
  \end{equation}
  By Lemma~\ref{trivialcliquecover}, we have $\xi\subset M_i$ for some $i<d$.
  Therefore, $\rho_i\cap\xi=\vn$.
  Since $\card{\xi}=n>\card{\eta\setminus\rho_i}$, we conclude that
  $\xi\not\subset\eta$. Choosing $\af\in\xi\setminus\eta$, we have
  \[\{\nu\in[\xi]^d\suchthat\af\in\nu\}\subset\inv{\eps}[\{0\}]\]
  because of \eqref{wvperp}. Therefore, $d=1$ implies
  \[[\{\af\}\cup\zeta]^d=\{\af\}\cup[\zeta]^d\subset\inv{\eps}[\{0\}]\cup[\zeta]^d,\]
  which contradicts $u\not\perp v_{\bt,0}$. Thus, $d\geq 2$.
  
  We have $[\xi]^d\not\subset\inv{\eps}[\{0\}]$ because $u\not\perp v_{\bt,0}$.
  Combining this fact with \eqref{wvperp}, we have $\dl\in\xi$.
  Since $d\geq 2$, we may extend $\{\af,\dl\}$ to some
  $\tau\in[\xi]^d$, which is not is not in $\inv{\eps}[\{0\}]$
  because $\dl\not\in\bigcup\vec T$. Again applying \eqref{wvperp},
  we have $\af\in\tau\subset\eta$, which contradicts $\af\in\xi\setminus\eta$.
\end{proof}
\begin{cor}
  Given a club $\mc{E}\subset[C_{\ka,d}]^{\leq\ka}$, there is $\mc{B}\in[\mc{E}]^d$
  such that $\gen{\bigcup\mc{B}}\not\leq_\ka C_{\ka,d}$. Hence,
  II does not have an uncoordinated winning strategy for $\mc{G}(C_{\ka,d},\ka,d)$.
\end{cor}

\section{Open-open games for teams}\label{secopenopen}
In this section we define a team version of the open-open game of
Daniels, Kunen, and Zhou (henceforth, DKZ) and use it to give
a new characterization of Cohen algebras. We also observe that projective
Boolean algebras can be characterized in terms of a strengthening of DKZ's
topological property of being very I-favorable.

\subsection{I-favorable spaces for teams}
\begin{defn}
  Define the \emph{open-open} game $\mc{O}_\tau(X)$ for space $X$ and
  nonzero finite team size $\tau$ as follows.
  There are $\om$ rounds. In round $n$,
  player $k$ of team I plays an open set $U_{n,k}$ and
  player $k$ of team II plays an open set $V_{n,k}$.
  Team I wins iff at least one of the following conditions holds.
  \begin{enumerate}
  \item[(O1)] For some $n<\om$,
    $\bigcap_{k<\tau}(U_{n,k}\cap V_{n,k})=\vn\neq\bigcap_{k<\tau}U_{n,k}$.
  \item[(O2)] $\bigcup_{n<\om}\bigcap_{k<\tau}V_{n,k}$ is dense in $X$.
  \end{enumerate}
\end{defn}
For team size 1, the above game is the open-open game of DKZ~\cite{dkz}
except that in the latter I also wins if $V_{n,0}\not\subset U_{n,0}\neq\vn$.
However, this distinction does not change who has a winning strategy
because II shrinking each $V_{n,k}$ to $U_{n,k}\cap V_{n,k}$
has no effect on (O1) and makes (O2) more difficult for I to achieve.
If $\tau>1$ and $X$ is a nontrivial Hausdorff space, then
II does not have an uncoordinated winning strategy.
But it does make sense to ask when team I can guarantee victory
without coordination.

\begin{defn}
  Say $X$ is \emph{I-favorable for team size $\tau$} iff I
  has an uncoordinated winning strategy for $\mc{O}_\tau(X)$.
  Say that $X$ is \emph{I-favorable} iff $X$ is I-favorable for team size 1.
\end{defn}

If $\sigma$ is a $\mc{O}_\tau(X)$-strategy for I
and $1\leq\upsilon<\tau$, then consider the following
$\mc{O}_\upsilon(X)$-strategy $\sigma'$ for I.
To make $\sigma$ applicable, Team I pretends that II
has team size $\tau$ and that II plays
$V_{n,k}=V_{n,\upsilon-1}$ for all all $k\in[\upsilon,\tau)$.
Players $0,\ldots,\upsilon-2$ of team I each play according to $\sigma$;
player $\upsilon-1$ plays $U_{n,\upsilon-1}\cap\cdots\cap U_{n,\tau-1}$.
If $\sigma$ is winning, then so $\sigma'$;
if $\sigma$ is uncoordinated, then so is $\sigma'$.

In the opposite direction, if $\sigma$ is a winning $\mc{O}_1(X)$-strategy
for I (or for II), then the winning team can also win $\mc{O}_\tau(X)$
for any $\tau$. To apply $\sigma$, each player of the winning team
pretends that the opposing team is a single player playing $\bigcap_{k<\tau}V_{n,k}$
($\bigcap_{k<\tau}U_{n,k}$ if $\sigma$ is for II);
each player of the winning team plays $\bigcap_{k<\tau}U_{n,k}$
($\bigcap_{k<\tau}V_{n,k}$ if $\sigma$ is for II).
However, this modified $\sigma$ is coordinated.
Indeed, we will give examples where $X$ is I-favorable for team size $\tau$
but not for team size $\tau+1$.

The topology of a space $X$ is a complete Boolean algebra \wrt\
inclusion. DKZ naturally generalized the open-open game
to arbitrary Boolean algebras and arbitrary posets.
Here are the analogs for teams.
\begin{defn}
  Define the open-open game $\mc{O}_\tau(A)$ for
  a Boolean algebra $A$ and team size $\tau\in[1,\om)$ as follows.
  There are $\om$ rounds. In round $n$,
  player $k$ of team I plays $a_{n,k}\in A$ and
  player $k$ of team II plays $b_{n,k}\in A$.
  Team I wins iff at least one of the following conditions holds.
  \begin{enumerate}
  \item[(A1)] For some $n<\om$,
    $\bigwedge_{k<\tau}(a_{n,k}\wedge b_{n,k})=0\neq\bigwedge_{k<\tau}a_{n,k}$.
  \item[(A2)] $\{\bigwedge_{k<\tau}b_{n,k}\suchthat n<\om\}\setminus\{0\}$
    is predense in $A$.
  \end{enumerate}
\end{defn}

\begin{defn}
  Define the open-open game $\mc{O}_\tau(\mb{P})$ for
  poset $\mb{P}$ and team size $\tau\in[1,\om)$ as follows.
  There are $\om$ rounds. In round $n$,
  player $k$ of team I plays $a_{n,k}\in\mb{P}$ and
  player $k$ of team II plays $b_{n,k}\in\mb{P}$.
  Team I wins iff at least one of the following conditions holds.
  \begin{enumerate}
  \item[(P1)] For some $n<\om$, there is a common extension of
    $a_{n,0},\ldots,a_{n,\tau-1}$ but no common extension of
    $a_{n,0},b_{n,0},\ldots,a_{n,\tau-1},b_{n,\tau-1}$.
  \item[(P2)] $\{c\in\mb{P}\suchthat\exists n\ \forall k\ c\leq b_{n,k}\}$
    is dense in $\mb{P}$.
  \end{enumerate}
\end{defn}

DKZ characterized the I-favorability of posets as the existence of
a club of countable complete suborders.
Balcar, Jech, and Zapletal (henceforth, BJZ) proved that I-favorability
for Boolean algebras is also characterized by II having a winning strategy
for a game similar to the FKS game for $\ka=\om$.
\cite[Thm 4.3]{bjz}
We will extend both of these characterizations to open-open team games.

\begin{defn}\
  \begin{itemize}
  \item Say that a subalgebra $A$ of a Boolean algebra $B$
    is a \emph{regular subalgebra} and write $A\subreg B$ iff
    every nonzero $b\in B$ has a \emph{reduction} to $A$, that is, some
    nonzero $a\in A$ such that for all $c\in A$ we have
    $c\not\perp a\Rightarrow c\not\perp b$.
  \item Say that a subset $\mb{P}$ of poset $\mb{Q}$
    is a \emph{complete suborder} and write $\mb{P}\subset_c\mb{Q}$ iff
    \begin{itemize}
    \item $p_0\perp p_1$ in $\mb{P}$ implies $p_0\perp p_1$ in $\mb{Q}$, and
    \item for each $q\in\mb{Q}$ there exists
    a \emph{reduction} of $q$ to $\mb{P}$, that is, some
    $p\in\mb{P}$ such that for all $r\in\mb{P}$ we have
    $r\not\perp p\Rightarrow r\not\perp q$.
    \end{itemize}
  \item Given a Boolean algebra $A$, let $\overline{A}$ denote its completion.
  \item Given a poset $\mb{P}$,
    \begin{itemize}
    \item let $\mb{P}/{\sim_{\mb{P}}}$ denote its separative quotient,
    \item let $\overline{\mb{P}}$ denote the Boolean completion of
      $\mb{P}/{\sim_{\mb{P}}}$ (that is, the complete Boolean algebra
      consisting of the regular open subsets of $\mb{P}$ ordered by inclusion,
      with each element of $\mb{P}/{\sim_{\mb{P}}}$ naturally identified
      with an element of $\overline{\mb{P}}$), and
    \item let $\gen{\mb{P}}$ denote the Boolean subalgebra of
      $\overline{\mb{P}}$ generated by $\mb{P}/{\sim_{\mb{P}}}$.
    \end{itemize}
  \end{itemize}
\end{defn}
We will use the following facts.
\begin{itemize}
\item $\mb{P}\subset_c\mb{Q}$ iff every set $D$ that is predense in $\mb{P}$
  is also predense in $\mb{Q}$.
\item If $\mb{P}\subset\mb{Q}$ and
  $\mb{P}/{\sim_{\mb{Q}}}\subset_c\overline{\mb{Q}}$,
  then $\mb{P}\subset_c\mb{Q}$.
\item If $A_0$ and $A_1$ are dense subalgebras of $B$ and $B$ is a subalgebra
  of $C$, then $A_0\subreg C$ implies $A_1\subreg B$.
\end{itemize}

\begin{defn}
  Given a poset $\mb{P}$ or a Boolean algebra $A$, respectively define the
  \emph{BJZ game} $\mc{C}_\tau(\mb{P})$ or $\mc{C}_\tau(A)$
  for nonzero finite team size $\tau$ as follows. There are $\om$ rounds.
  In each round, each player of each team plays an element of $\mb{P}$ or $A$.
  For $A$, team II wins iff the Boolean closure of the set of all
  elements played is a regular subalgebra of $A$.
  For $\mb{P}$, team II wins iff the Boolean algebra generated by the set of all
  $\sim_{\mb{P}}$-equivalence classes of elements played is
  a regular subalgebra of $\overline{\mb{P}}$.
\end{defn}

\begin{lem}\label{posetgames}
  Given $1\leq\tau<\om$ and a poset $\mb{P}$, \tfae.
  \begin{enumerate}
  \item I has an uncoordinated winning strategy for $\mc{O}_\tau(\mb{P})$.
  \item II has an uncoordinated winning strategy for $\mc{C}_\tau(\mb{P})$.
  \item There is a club $\mc{D}\subset[\mb{P}]^{\leq\alo}$ such that
    $\bigwedge\vec{\mb{Q}}\subset_c\overline{\mb{P}}$ for all
    $\vec{\mb{Q}}\in\mc{D}^\tau$.
  \end{enumerate}
\end{lem}
Above, $\bigwedge\vec{\mb{Q}}$ denotes
\(\{\bigwedge_i(q_i/{\sim_{\mb{P}}})\suchthat\vec q\in\prod\vec{\mb{Q}}\}\).
\begin{proof}
  (2)$\Rightarrow$(3): Let $\rho$ be an uncoordinated winning strategy
  for II in $\mc{C}_\tau(\mb{P})$. Suppose $\rho\in M_k\subh$ and
  $M_k$ is countable for each $k<\tau$.
  \Istst\ $\gen{\mb{P}\cap\bigcup\vec{M}}\subreg\overline{\mb{P}}$.
  Let team I play against $\rho$ as follows. For each $k<\tau$,
  player $k$ enumerates $\mb{P}\cap M_k$. Since $\rho$ is uncoordinated
  and $M_k$ is $\rho$-closed, every play by player $k$ of team II
  will be in $\mb{P}\cap M_k$. Since $\rho$ is winning,
  $\gen{\mb{P}\cap\bigcup\vec{M}}\subreg\overline{\mb{P}}$.

  (3)$\Rightarrow$(1): Fix $\mc{D}$ as in (3) and fix a surjection
  $f\colon\om\to\om^{\tau+1}$ such that $f(n)(\tau)\leq n$ for all $n$.
  Define an uncoordinated strategy for I in $\mc{O}_\tau(\mb{P})$ as follows.
  In round $n$, each player $k$ of team I chooses $g_{n,k}\colon\om\to\mb{P}$
  such that $$\bigcup_{m<n}\ran(g_{m,k})\subset\ran(g_{n,k})\in\mc{D}$$
  and such that any previous plays $b_{m,k}$ for $m<n$ by opposing player $k$
  of team II are also in $\ran(g_{n,k})$. Then player $k$ of team I plays
  $a_{n,k}=g_{f(n)(\tau),k}(f(n)(k))$. Let $\mb{Q}_k=\bigcup_{n<\om}\ran(g_{n,k})$.
  After $\om$ rounds, every $\vec{q}\in\prod_{k<\tau}\mb{Q}_k$ has been played
  by team I in some round.  Therefore, if II has not lost according to (P1),
  then the meets $\bigwedge_{k<\tau}(b_{n,k}/{\sim})$ of II's plays
  form a predense subset $E$ of $\bigwedge\vec{\mb{Q}}$.
  By (3), $\bigwedge\vec{\mb{Q}}\subset_c\overline{\mb{P}}$.
  Therefore, every predense subset $D$ of $\bigwedge\vec{\mb{Q}}$ generates
  a dense ideal $$\{c\in\mb{P}\suchthat\exists d\in D\ c\leq D\}$$ of $\mb{P}$.
  Thus, I has an uncoordinated winning strategy.

  (1)$\Rightarrow$(3): Let $\rho$ be an uncoordinated winning strategy
  for I in $\mc{O}_\tau(\mb{P})$. Let $\mb{Q}=\bigwedge_k(M_k\cap\mb{P})$
  where $\rho\in M_k\subh$ and $M_k$ is countable for each $k<\tau$.
  \Istst\ $\mb{Q}\subset_c\overline{\mb{P}}$. Therefore,
  supposing that $D$ is predense in $\mb{Q}$, \istst\ it is also predense
  in $\overline{\mb{P}}$. Seeking a contradiction,
  suppose that some $c\in\overline{\mb{P}}$ is incompatible with every $d\in D$.
  Then let team II play against $\rho$ as follows.
  In round $n$, inductively assume that team II has played
  $\vec{b}_m\in\prod\vec M$ for each $m<n$. Then team I will play some
  $\vec{a}_n\in\prod\vec M$ in round $n$ because $\rho$ is uncoordinated
  and each $M_k$ is $\rho$-closed. Let team II respond
  with $\vec{b}_n=\vec{a}_n$ if $\bigwedge\vec{a}_n=0$
  or, if $\bigwedge\vec{a}_n\neq0$, with $\vec{b}_n\in\prod\vec M$ such that
  $\bigwedge\vec{a}_n\geq\bigwedge\vec{b}_n\leq d$ for some $d\in D$.
  In both cases, II avoids losing according to (P1) and arranges that
  $\bigwedge\vec{b}_n\perp c$. Thus, II may play against $\rho$
  such that the ideal of $\mb{P}$ generated by $\bigwedge\vec{b}_n$ for $n<\om$
  will not be predense, in contradiction with (1).
  
  (3)$\Rightarrow$(2): In the game $\mc{C}_\tau(\mb{P})$,
  let $\mb{Q}_k$ denote the set $\bigcup_{n<\om}\{a_{n,k},b_{n,k}\}$
  of all plays made by player $k$ of team I or player $k$ of team II.
  Using a bookkeeping argument like in the proof of (3)$\Rightarrow$(1)
  (but simpler), II can, without coordination, ensure that
  $\mb{Q}_k\in\mc{D}$ for each $k<\tau$.
  By (3), $\gen{\bigwedge\vec{\mb{Q}}}\subreg\overline{\mb{P}}$.
  Therefore, II has an uncoordinated winning strategy for $\mc{C}_\tau(\mb{P})$.
\end{proof}
For team size 1, we have $\bigwedge\vec{\mb{Q}}\subset_c\overline{\mb{P}}$
iff $\mb{Q}_0\subset_c\mb{P}$. Therefore, Lemma~\ref{posetgames}
generalizes Theorem~1.6 of \cite{dkz}.

\begin{defn}
  Given a space $X$, let $\op(X)$ denote the poset of
  nonempty open subsets of $X$ ordered by inclusion.
\end{defn}

\begin{cor}\label{topteamopenopen}
  Given $1\leq\tau<\om$ and a nonempty topological space $X$, \tfae.
  \begin{itemize}
  \item I has an uncoordinated winning strategy for $\mc{O}_\tau(X)$.
  \item II has an uncoordinated winning strategy for $\mc{C}_\tau(\op(X))$.
  \item There is a club $\mc{D}\subset[\op(X)]^{\leq\alo}$ such that
    $\bigwedge\vec{\mb{Q}}\subset_c\overline{\op(X)}$ for all
    $\vec{\mb{Q}}\in\mc{D}^\tau$.
  \end{itemize}
\end{cor}
\begin{proof}
  Given Lemma~\ref{posetgames} applied to $\op(X)$, \istst\ removing
  the option to play $U_{n,k}=\vn$ or $V_{n,k}=\vn$ from the game $\mc{O}_\tau(X)$
  does not change whether $I$ has an uncoordinated winning strategy.
  First, it never helps team I to play $U_{n,k}=\vn$
  because player $k$ of team II can respond with $V_{n,k}=\vn$, which prevents
  team I from making progress towards winning condition (O2).
  Thus, allowing $U_{n,k}=\vn$ does not help team I, coordinated or not,
  to defeat team II, coordinated or not.
  Second, if every $U_{n,k}$ is nonempty, then allowing $V_{n,k}=\vn$
  does not help a coordinated team II win:
  if $\bigcap_kU_{n,k}\neq\vn$, then $V_{n,k}=\vn$ for any $k$
  is an immediate loss for team II; if $\bigcap_kU_{n,k}=\vn$,
  then team II can still achieve the optimal outcome
  $\bigcap_kV_{n,k}=\vn$ for round $n$ by playing $V_{n,k}=U_{n,k}\neq\vn$.
\end{proof}

\begin{cor}\label{boolteamopenopen}
  Given $1\leq\tau<\om$ and a Boolean algebra $A$, \tfae.
  \begin{enumerate}
  \item I has an uncoordinated winning strategy for $\mc{O}_\tau(A)$.
  \item II has an uncoordinated winning strategy for $\mc{C}_\tau(A)$.
  \item There is a club $\mc{D}\subset[A]^{\leq\alo}$ such that
    $\gen{\bigcup\vec B}\subreg A$ for all $\vec B\in\mc{D}^\tau$.
  \end{enumerate}
\end{cor}
\begin{proof}
  Apply Lemma~\ref{posetgames} to the separative poset $A\setminus\{0\}$,
  noting that every club subset of $[A\setminus\{0\}]^{\leq\alo}$ contains
  a club whose elements are all of the form $B\setminus\{0\}$ where
  $B$ is a subalgebra of $A$.
\end{proof}
In \cite{hs}, Heindorf and Shapiro call Boolean algebras satisfying (3) of
Corollary~\ref{boolteamopenopen} for team size 1 \emph{regularly filtered}.
In \cite{bjz}, $\pi$-homogeneous Boolean algebras satisfying
(3) of Corollary~\ref{boolteamopenopen} for team size 1
are called \emph{semi-Cohen}. Theorem~4.3(a,d,e) of \cite{bjz} is
Corollary~\ref{boolteamopenopen} restricted to team size 1
and $\pi$-homogeneous Boolean algebras.

\subsection{Characterizing Cohen algebras}
\begin{defn}\ 
  \begin{itemize}
  \item Say that Boolean algebras $A,B$ are \emph{cocomplete} if
    $\overline{A}\cong\overline{B}$.
  \item Say that a Boolean algebra $A$ is a \emph{Cohen algebra}
    if it cocomplete with an infinite free Boolean algebra.
  \end{itemize}
\end{defn}
Thus, a forcing $\mb{P}$ is equivalent to a Cohen forcing
$\mathrm{Fn}(J,2)$ with $J$ infinite iff $\gen{\mb{P}}$ is a Cohen algebra.
Shapiro proved the following characterization of Cohen algebras.

\begin{defn} Given a Boolean algebra $A$:
  \begin{itemize}
  \item Say that $A$ is \emph{weakly projective}
    if it cocomplete with a projective Boolean algebra. \cite{hs}
  \item Say that $A$ is \emph{$\pi$-homogeneous}
    if $A$ is infinite and $\pi(A\restrict a)=\pi(A)$
    for all $a\in A\setminus\{0\}$.
  \end{itemize}
\end{defn}

\begin{thm}[{\cite[Ch. 5]{hs}}]\label{weakprojchain}
  A Boolean algebra is Cohen iff it is weakly projective and
  $\pi$-homogeneous. Moreover, a Boolean algebra $A$ is weakly projective
  iff there is a transfinite sequence $(B_\af)_{\af<\eta}$
  of subalgebras of $A$ such that $A=\gen{\bigcup\vec B}$ and,
  for each $\af<\eta$, $\card{B_\af}\leq\alo$ and
  $$\gen{\bigcup_{\bt<\af}B_\bt}\subreg\gen{\bigcup_{\bt<\af+1}B_\bt}.$$
\end{thm}
In the above theorem, the $\pi$-homogeneity requirement is less
restrictive than it appears because every weakly projective Boolean algebra
is co-complete with a product of countably many Cohen algebras.
\cite[Thm. 5.2.2]{hs}
The topological equivalent of this fact is that a Stone space is
co-absolute with a Dugundji space iff it is co-absolute with a countable sum
of powers of 2.

Jech characterized weakly projective Boolean algebras in terms of
clubs of countable regular subalgebras.
Arguing like in the proof of Theorem~\ref{tightgameclub},
we will weaken the closure requirement he puts on the club.

\begin{thm}[Jech]\label{jechclubadd}
  A Boolean algebra $A$ is weakly projective iff there is a club
  $\mc{D}\subset[A]^{\leq\alo}$ such that $B\subreg A$ for all $B\in\mc{D}$
  and $\gen{B_0\cup B_1}\in\mc{D}$ for all $B_0,B_1\in\mc{D}$.
\end{thm}
\begin{proof}
  See \cite[Thm 3.2]{bjz} for a proof restricted to
  $\pi$-homogeneous Boolean algebras. See \cite[Thm 5.3.9]{hs}
  for a complete proof and attribution of the result of Jech.
\end{proof}

\begin{thm}\label{weakprojteam}
  Given a Boolean algebra $A$, \tfae.
  \begin{enumerate}
  \item $A$ is weakly projective.
  \item There is a club $\mc{D}\subset[A]^{\leq\alo}$ such that
    $\gen{\bigcup\vec B}\subreg A$ for all $\vec B\in\mc{D}^\tau$,
    for all $\tau\in[1,\om)$ satisfying $\al_\tau\leq\pi(A)$.
  \item For each $\tau\in[1,\om)$ satisfying $\al_\tau\leq\pi(A)$,
    I has an uncoordinated winning strategy for $\mc{O}_\tau(A)$.
  \item For each $\tau\in[1,\om)$ satisfying $\al_\tau\leq\pi(A)$,
    II has an uncoordinated winning strategy for $\mc{C}_\tau(A)$.
  \end{enumerate}
\end{thm}
\begin{proof}
  By Corollary~\ref{boolteamopenopen},
  (2)$\Leftrightarrow$(3)$\Leftrightarrow$(4).
  By Theorem~\ref{jechclubadd}, (1)$\Rightarrow$(2).
  We will show that (2)$\Rightarrow$(1).
  Let $(M_\af)_{\af<\pi(A)}$ be a \lappso\ with $A,\mc{D}\in M_0$,
  and let $B=\gen{A\cap\bigcup_{\af<\pi(A)}M_\af}$. Then $B$ is a dense subalgebra
  of $A$ and, hence, \istst\ $B$ is weakly projective.
  Given $\af<\pi(A)$, let $n=\da[\oml]$, which satisfies $\al_n\leq\pi(A)$.
  By Theorem~\ref{weakprojchain}, \istst\ $E\subreg\gen{E\cup F}$
  where $E=\gen{A\cap\bigcup_{i<n}M_{\af,i}}$ and $F=A\cap M_\af$.
  Therefore, given $p\in A$, \istf\ a reduction of $p$ to $E$.
  By elementarity, it is enough to assume $p\in A\cap M_\af$ and
  find a reduction of $p$ to $E\cap M_\af$.
  Because $A\cap M_{\af,i}\cap M_\af\in\mc{D}$ for each $i<n$,
  there is a reduction of $p$ to $C$ where
  $C=\gen{A\cap M_\af\cap\bigcup_{i<n}M_{\af,i}}$.
  By elementarity again, we have $C=E\cap M_\af$.
\end{proof}

\begin{cor}\label{cohenteam}
  Given a $\pi$-homogeneous Boolean algebra $A$, \tfae.
  \begin{itemize}
  \item $A$ is Cohen.
  \item There is a club $\mc{D}\subset[A]^{\leq\alo}$ such that
    $\gen{\bigcup\vec B}\subreg A$ for all $\vec B\in\mc{D}^\tau$,
    for all $\tau\in[1,\om)$ satisfying $\al_\tau\leq\pi(A)$.
  \item For each $\tau\in[1,\om)$ satisfying $\al_\tau\leq\pi(A)$,
    I has an uncoordinated winning strategy for $\mc{O}_\tau(A)$.
  \item For each $\tau\in[1,\om)$ satisfying $\al_\tau\leq\pi(A)$,
    II has an uncoordinated winning strategy for $\mc{C}_\tau(A)$.
  \end{itemize}
\end{cor}

The Stone dual of a regular Boolean subalgebra
is a semi-open\footnote{$f\colon X\to Y$ is \emph{semi-open} if $f$ maps
  sets with nonempty interior to sets with nonempty interior.}
continuous surjection between Stone spaces.

\subsection{Very I-favorable spaces for teams}
In order to show that every dyadic space\footnote{A Hausdorff space is
  \emph{dyadic} iff it is a continuous image of a power of 2.} is I-favorable,
DKZ introduced the class of very I-favorable spaces,
a subclass of the I-favorable spaces that includes the powers of 2 and,
when restricted to $T_3$ spaces, is closed \wrt\ images of
perfect maps.\footnote{
  DKZ proved closure \wrt\ continuous images of compact Hausdorff
  spaces. Kucharski, Plewik and Valov observed that the same proof applies more
  generally. \cite{kpvvery}}
  
\begin{defn} Given a space $X$:
  \begin{itemize}
  \item If $\mc{E}\subset\op(X)$, then let $\mc{E}\subset_!\op(X)$ indicate that
    for each $\mc{S}\subset\mc{E}$ there exists $\mc{T}\subset\mc{E}$
    such that $\bigcup\mc{T}$ is
    the exterior\footnote{The interior of the complement.} of $\bigcup\mc{S}$.
  \item Say that $X$ is \emph{very I-favorable} iff there is a club
    $\mb{D}\subset[\op(X)]^{\leq\alo}$ such that $\mc{E}\subset_!\op(X)$
    for all $\mc{E}\in\mb{D}$.
  \end{itemize}
\end{defn}

Kucharski, Plewik and Valov (henceforth, KPV)
characterized very I-favorable Hausdorff spaces in terms of
an inverse limit system involving nearly open continuous surjective
bonding maps between second countable $T_0$ spaces
(that may not be Hausdorff or even $T_1$). \cite{kpvvery}

\begin{defn}\
  \begin{itemize}
  \item A subset $N$ of a space $X$ is \emph{nearly open}
    if $N\subset\interior_X\cl_XN$. 
  \item A map $f\colon X\to Y$ between spaces
    is \emph{nearly open} if it maps open sets to nearly open sets.
  \end{itemize}
\end{defn}

The connection between $\subset_!$ and nearly open maps is the following.

\begin{lem}[{\cite[Prop. 2.1]{kpvvery}}]\label{nearlyopen}
    A continuous map $f\colon X\to Y$ between spaces is nearly open iff
    $\{\inv{f}[V]\suchthat V\in\op(Y)\}\subset_!\op(X)$.
\end{lem}

The following example illustrates why KPV needed to consider
non-Hausdorff spaces.

\begin{example}
  Let $M\subh$ be countable and let $\dl=\oml\cap M$.
  Let $X=2^{\oml}$ and let $f\colon X\to Y$ be the quotient map
  induced by identifying points $p,q\in 2^{\oml}$
  iff their neighborhood filters agree on $M$, that is, iff
  $$\{U\in\op(X)\cap M\suchthat p\in U\}=\{U\in\op(X)\cap M\suchthat q\in U\}.$$
  Let $a,b\in X$ where $a(\af)=b(\af)=0$ for all $\af\neq\dl$
  but $a(\dl)=0\neq1=b(\dl)$.
  Then $f(a)\neq f(b)$ because $X\setminus\{a\}\in M$.
  But if $b\in U\in\op(X)\cap M$, then $a\in U$.
  Thus, every neighborhood of $f(b)$ is also a neighborhood of $f(a)$.
\end{example}

KPV also introduced a strengthening of very I-favorable that avoids the
need to consider non-Hausdorff spaces.

\begin{defn} Given a space $X$:
  \begin{itemize}
  \item Let $\Sigma_X$ denote the set of all nonempty cozero\footnote{
    $U\subset X$ is cozero iff $X\setminus U=\inv{f}[\{0\}]$ for some
    continuous $f\colon X\to\mb{R}$.} subsets of $X$.
  \item Say that $X$ is very I-favorable \emph{\wrt\ cozero sets}
    iff there is a club $\mb{D}\subset[\Sigma_X]^{\leq\alo}$ such that
    $\mc{E}\subset_!\op(X)$ for all $\mc{E}\in\mb{D}$.
  \end{itemize}
\end{defn}

KPV characterized Tychonoff spaces very I-favorable for cozero sets in terms of
an inverse limit system involving nearly open continuous surjective
bonding maps between separable metric spaces.
For compact Hausdorff spaces, the situation is even nicer:
a continuous map between compact Hausdorff spaces is open iff it is nearly open.
This allowed KPV to connect their strengthening of very I-favorable
with \scepin's class of openly generated compacta.

\begin{defn} Given a compact Hausdorff space $X$:
  \begin{itemize}
  \item Let $C(X)$ denote the set of all continuous real-valued functions on $X$.
  \item Given a set $\mc{F}$, let $\varsigma^X_{\mc{F}}\colon X\to X/\mc{F}$
    be the quotient map induced by identifying points $p,q\in X$ iff
    $h(p)=h(q)$ for all $h\in C(X)\cap\mc{F}$.
  \item (\scepin) Say that $X$ is \emph{openly generated} iff there
    is a club $\mb{D}\subset[C(X)]^{\leq\alo}$ such that $\varsigma^X_{\mc{E}}$
    is open for all $\mc{E}\in\mb{D}$.
  \end{itemize}
\end{defn}
Note that if $X$ is compact Hausdorff and $\mc{F}$ is a set,
then $X/\mc{F}$ is also compact Hausdorff.

\begin{thm}[{\cite[Cor 4.3]{kpvvery}}]
  A compact Hausdorff space is very I-favorable \wrt\ cozero sets
  iff it is openly generated.
\end{thm}

\begin{cor}
  Given a Stone space $X$ with clopen algebra $A$, \tfae.
  \begin{itemize}
  \item $X$ is very I-favorable \wrt\ cozero sets.
  \item There is a club $\mc{C}\subset[A]^{\leq\alo}$ such that
    $B\leq_{\alo}A$ for all $B\in\mc{C}$.
  \item II has a winning strategy for $\mc{G}(A,\alo,1)$.
  \item $A$ has the FN.
  \end{itemize}
\end{cor}
\begin{proof}
The Stone dual of a continuous open surjection is an $\alo$-subalgebra,
more commonly called a relatively complete subalgebra.
Therefore, a Stone space is openly generated iff its clopen algebra
has a club of countable relatively complete subalgebras.
\end{proof}

The above corollary motivates the following generalization of the FKS team game
$\mc{G}(A,\alo,\tau)$ from Boolean algebras to compact Hausdorff spaces
and an associated generalization of very I-favorable \wrt\ cozero sets.

\begin{defn} Given a compact Hausdorff space $X$ and $\tau\in[1,\om)$:
  \begin{itemize}
  \item Define the \emph{open quotient game} $\mc{Q}(X,\tau)$ as follows.
    In round $n$, team I plays $(f_{n,k})_{k<\tau}\in C(X)^\tau$
    and then II plays $(g_{n,k})_{k<\tau}\in C(X)^\tau$.
    After $\om$ rounds, II wins iff $\varsigma^X_{\mc{E}}$ is open where
    $\mc{E}=\bigcup_{n<\om}\bigcup_{k<\tau}\{f_{n,k},g_{n,k}\}$.
  \item Say that $X$ is \emph{openly $\tau$-generated} iff there
    is a club $\mb{D}\subset[C(X)]^{\leq\alo}$ such that
    $\varsigma^X_{\bigcup\vec{\mc{E}}}$ is open for all $\vec{\mc{E}}\in\mb{D}^\tau$.
  \item Say that $X$ is \emph{$\tau$-very I-favorable \wrt\ cozero sets}
    iff there is a club $\mb{D}\subset[\Sigma_X]^{\leq\alo}$ such that
    for each $\vec{\mc{E}}\in\mb{D}^\tau$ we have
    \[\left\{\bigcap_iU_i\suchthat\vec U\in\prod\vec{\mc{E}}\right\}
    \setminus\{\vn\}\subset_!\op(X).\]
  \end{itemize}
\end{defn}

\begin{lem}\label{exclaimbase}
  If $X$ is a space, $\bigcup\mc{B}=X$, $\mc{B}\subset\mc{E}\subset\op(X)$, 
  and every element of $\mc{E}$ is the union of a subset of $\mc{B}$, then
  $\mc{E}\subset_!\op(X)$ iff $\mc{B}\subset_!\op(X)$.
\end{lem}
\begin{proof}
  Assuming $\mc{E}\subset_!\op(X)$, 
  for each $\mc{S}\subset\mc{B}$ there exists $\mc{T}\subset\mc{E}$
  such that $\bigcup\mc{T}$ is the exterior of $\bigcup\mc{S}$.
  And for some $\mc{U}\subset\mc{B}$, we have $\bigcup\mc{T}=\bigcup\mc{U}$.
  Thus, $\mc{E}\subset_!\op(X)$ implies $\mc{B}\subset_!\op(X)$.

  Conversely,
  for each $\mc{V}\subset\mc{E}$, there exists $\mc{A}\subset\mc{B}$ such
  that $\bigcup\mc{A}=\bigcup\mc{V}$. And, assuming $\mc{B}\subset_!\op(X)$,
  there exists $\mc{C}\subset\mc{B}$ such that $\bigcup\mc{C}$ is
  the exterior of $\bigcup\mc{A}$. Moreover, $\mc{C}\subset\mc{B}\subset\mc{E}$.
  Thus, $\mc{B}\subset_!\op(X)$ implies $\mc{E}\subset_!\op(X)$.
\end{proof}

\begin{thm}\label{cozero}
  Given $1\leq\tau<\om$ and a compact Hausdorff space $X$, \tfae.
  \begin{enumerate}
  \item $X$ is openly $\tau$-generated.
  \item $X$ is $\tau$-very I-favorable \wrt\ cozero sets.
  \item II has an uncoordinated winning strategy for $\mc{Q}(X,\tau)$.
  \end{enumerate}
\end{thm}
\begin{proof}
  (1)$\Rightarrow$(2): Let $\mb{D}\subset[C(X)]^{\leq\alo}$ be a club such that
  $\varsigma^X_{\bigcup\vec{\mc{F}}}$ is open for all $\vec{\mc{F}}\in\mb{D}^\tau$.
  For each $k<\tau$, suppose that $\mb{D}\in M_k\subh$ and $M_k$
  is countable. \Istst\ $\mc{B}\subset_!\op(X)$ where
  \begin{equation}\label{elembase}
    \mc{B}=\left\{\bigcap\vec U
  \suchthat\vec U\in\prod_k(\Sigma_X\cap M_k)\right\}\setminus\{\vn\}.
  \end{equation}
  Letting $\mc{T}$ be the topology generated by $C(X)\cap\bigcup\vec M$,
  the quotient map induced by $\mc{T}$ is open because
  $C(X)\cap M_k\in\mb{D}$ for each $k$.
  So, by Lemma~\ref{nearlyopen}, $\mc{T}\subset_!\op(X)$.
  By elementarity, $\Sigma_X\cap M_k$ is a base for the topology generated
  by the set of the functions $C(X)\cap M_k$, for each $k$.
  Therefore, $\Sigma_X\cap\bigcup\vec M$ is a base for $\mc{T}$;
  hence, $\mc{B}\subset_!\op(X)$ by Lemma~\ref{exclaimbase}.

  (2)$\Rightarrow$(1): Let $\mb{E}\subset[\Sigma_X]^{\leq\alo}$ be a club
  witnessing that $X$ is $\tau$-very I-favorable \wrt\ cozero sets.
  For each $k<\tau$, suppose that $\mb{E}\in M_k\subh$ and $M_k$
  is countable. \Istst\ $\varsigma_{\bigcup\vec M}$ is open.
  Defining $\mc{B}$ as in~\eqref{elembase}, we have $\mc{B}\subset_!\op(X)$
  because $\Sigma_X\cap M_k\in\mb{E}$ for each $k$. By Lemma~\ref{exclaimbase},
  $\mc{T}\subset_!\op(X)$ where $\mc{T}$ is the topology generated by $\mc{B}$.
  By Lemma~\ref{nearlyopen}, $\mc{T}$ induces a nearly open quotient map
  $\varphi$ on $X$.
  By elementarity, the set of functions $C(X)\cap M_k$ generates the
  same topology as the set of cozero sets $\Sigma_X\cap M_k$, for each $k$.
  Therefore, $C(X)\cap\bigcup\vec M$ generates the topology $\mc{T}$ and,
  hence, $\varsigma^X_{\bigcup\vec M}=\varphi$, which is nearly open.
  Since $X/\bigcup\vec M$ is compact Hausdorff,
  $\varsigma^X_{\bigcup\vec M}$ is open.
  
  (1)$\Rightarrow$(3): Given a club $\mb{E}\subset[\Sigma_X]^{\leq\alo}$,
  team II can use uncoordinated bookkeeping to ensure that each of the
  $\tau$ sets of plays $\mc{D}_k=\bigcup_{n<\om}\{f_{n,k},g_{n,k}\}$ is in $\mb{E}$.

  (3)$\Rightarrow$(1): Suppose II has an uncoordinated winning strategy $\rho$
  for $\mc{Q}(X,\tau)$. For any $\rho$-closed subsets $\mc{E}_k$
  for $k<\tau$, team I can play against $\rho$ so as to ensure that
  \(\mc{E}_k=\bigcup_{n<\om}\{f_{n,k},g_{n,k}\}\) for all $k$.
\end{proof}

\begin{cor}\label{teamverystone}
  Given $1\leq\tau<\om$ and a Stone space $X$ with clopen algebra $A$, \tfae.
  \begin{enumerate}
  \item $X$ is $\tau$-very I-favorable \wrt\ cozero sets.
  \item $X$ is openly $\tau$-generated.
  \item II has an uncoordinated winning strategy for $\mc{Q}(X,\tau)$.
  \item There is a club $\mc{E}\subset[A]^{\leq\alo}$ such that
    $\gen{\bigcup\mc{B}}\leq_{\alo} A$ for all $\mc{B}\in[\mc{E}]^{\leq\tau}$.
  \item II has an uncoordinated winning strategy for $\mc{G}(A,\alo,\tau)$.
  \end{enumerate}
\end{cor}
\begin{proof}
  By Theorem~\ref{cozero}, (1)$\Leftrightarrow$(2)$\Leftrightarrow$(3).
  By Lemma~\ref{geschketogame}, (4)$\Leftrightarrow$(5).
  To prove (2)$\Leftrightarrow$(4), suppose that $X\in M_k\subh$ for $k<\tau$.
  By elementarity, $A\cap M_k$ induces the same quotient map on $X$
  as $C(X)\cap M_k$, for each $k$. Therefore, $\gen{A\cap\bigcup\vec M}$
  induces the same quotient map as $C(X)\cap\bigcup\vec M$.
  So, by Stone duality, $\varsigma^X_{\bigcup\vec M}$ is open
  iff $\gen{A\cap\bigcup\vec M}\leq_{\alo}A$.
\end{proof}

\begin{cor}
  Given a Stone space $X$ with clopen algebra $A$, \tfae.
  \begin{enumerate}
  \item $A$ is projective.
  \item $X$ is $\tau$-very I-favorable \wrt\ cozero sets
    for all finite nonzero $\tau$ satisfying $\al_\tau\leq\card{A}$.
  \item $X$ is openly $\tau$-generated
    for all finite nonzero $\tau$ satisfying $\al_\tau\leq\card{A}$.
  \item II has an uncoordinated winning strategy for $\mc{Q}(X,\tau)$
      for all finite nonzero $\tau$ satisfying $\al_\tau\leq\card{A}$.
  \item II has an uncoordinated winning strategy for $\mc{G}(A,\alo,\tau)$
      for all finite nonzero $\tau$ satisfying $\al_\tau\leq\card{A}$.
  \end{enumerate}
\end{cor}
\begin{proof}
  By Corollary~\ref{teamverystone},
  (2)$\Leftrightarrow$(3)$\Leftrightarrow$(4)$\Leftrightarrow$(5).
  By Theorem~\ref{tightgameclub}, (5)$\Leftrightarrow$(1).
\end{proof}

\begin{thm}
  The property of being an openly $\tau$-generated compact Hausdorff space
  is preserved by products and retracts.
\end{thm}
\begin{proof}
  For products, suppose that $Y=\prod_{i\in I}X_i$, each $X_i$ is an
  openly $\tau$-generated compact Hausdorff space, and, for each $k<\tau$,
  that $Y\in M_k\subh$ and $M_k$ is countable.
  Given a point $\vec x$ in a basic open set $\prod\vec U\subset Y$,
  we will show that $\varsigma_{\bigcup\vec M}^Y(\vec x)$ has a neighborhood
  contained in $\varsigma_{\bigcup\vec M}^Y\left[\prod\vec U\right]$.
  Let $F\subset I$ be the (finite) support of $\vec U$.
  For each $i\in F$, let $L_i=\bigcup\{M_k\suchthat i\in M_k\}$.
  Letting $G=F\cap\bigcup\vec M$, we have $\varsigma_{L_i}^{X_i}[U_i]$
  open for each $i\in G$
  and, hence, $\min_jf_{i,j}(x_i)>0$ and $\bigcap_j\supp f_{i,j}\subset U_i$
  for some finite sequence $(f_{i,j})_{j<n_i}\in(C(X_i)\cap L_i)^{<\om}$.
  Letting $\pi_i\colon Y\to X_i$ be the $i$th coordinate projection,
  we have $f_{i,j}\circ\pi_i\in C(Y)\cap L_i$ for each $(i,j)$.
  Therefore, the neighborhood
  \[\bigcap_{i\in G}\bigcap_{j<n_i}\varsigma^Y_{\bigcup\vec M}
  \left[\inv{(f_{i,j}\circ\pi_i)}[(0,\infty)]\right]\]
  of $\varsigma_{\bigcup\vec M}^Y(\vec x)$ is as desired.

  For retracts, suppose that $r\colon X\to Y$ and $e\colon Y\to X$
  are continuous maps between compact Hausdorff spaces,
  that $r\circ e=\id_Y$, that $X$ is openly $\tau$-generated, and,
  for each $k<\tau$, that $r,e\in M_k\subh$ and $M_k$ is countable.
  Given an open $V\subset Y$ and $y\in V$, we will show that
  $\varsigma^Y_{\bigcup\vec M}(y)$ has a neighborhood contained in
  $\varsigma^Y_{\bigcup\vec M}[V]$. Letting $x=e(y)$ and $U=\inv{r}[V]$,
  we have $x\in U$ and $\varsigma^X_{\bigcup\vec M}[U]$ open; hence, we have
  $\min_if_i(x)>0$ and $\bigcap_i\supp f_i\subset U$ for some
  $\vec f\in(C(X)\cap\bigcup\vec M)^{<\om}$.
  We also have $f_i\circ e\in C(Y)\cap\bigcup\vec M$ for each $i$.
  Therefore, the neighborhood 
  \[\bigcap_i\varsigma^Y_{\bigcup\vec M}
  \left[\inv{(f_{i,j}\circ e)}[(0,\infty)]\right]\]
  of $\varsigma_{\bigcup\vec M}^Y(y)$ is as desired.
\end{proof}

\section{Locally finite characterizations of projective Boolean algebras}
\label{seclocfin}
In this section, we complete the proofs of the main theorem and main example
using various properties of tuples of Boolean subalgebras that are
natural weakenings of independence. We also use these properties
to give new characterizations of projective Boolean algebras
that are locally finite in a strong sense. (See Corollary~\ref{projlocfin}
and Subsection~\ref{purefin}.)

\subsection{Commuting tuples of Boolean subalgebras}
An important tool in \scepin's results is the concept of a bicommutative square
of quotient maps between topological spaces, to which Heindorf and Shapiro
applied Stone duality to obtain their concept of commuting pairs of Boolean
subalgebras, which they used to characterize the $\al_1$-FN in \cite[Ch. 4]{hs}.
Geschke also used this concept to prove his characterizations of tightly
$\ka$-filtered Boolean algebras for $\ka\geq\al_1$. \cite[Thm. 2.5]{geschke}\
(As explained in the proof of Lemma~\ref{geschkeunion}, Geschke's proof also works for $\ka=\alo$.)
In this subsection, we introduce a natural higher-arity generalization
of the concept of commuting pairs of subalgebras.
In the next subsection, we use it to give new characterizations of
$\ka$-filtered Boolean algebras and projective Boolean algebras in particular.

\begin{defn}\label{boolcomm}
  Given subalgebras $(A_i)_{i\in S}$ of a Boolean algebra $B$:
  \begin{itemize}
  \item Say that $\vec{A}$ is \emph{independent} iff, for every sequence of
    ultrafilters $\vec{U}\in\prod_i\ult(A_i)$, there is an ultrafilter
    $V\in\ult(B)$ extending $\bigcup\vec U$.
  \item Say that $\vec{A}$ \emph{commutes} iff, for every sequence of
    ultrafilters $\vec{U}\in\prod_i\ult(A_i)$, either there is an ultrafilter
    $V\in\ult(B)$ extending $\bigcup\vec U$, or $\vec{U}$ is \emph{incompatible},
    that is, $U_i\cap A_j\neq U_j\cap A_i$ for some $i,j\in S$.
\end{itemize}
\end{defn}
Thus, the commuting tuple concept is a form of conditional independence.
In category-theoretic terms, a sequence of subalgebras $\vec{A}$ of $B$ is:
\begin{itemize}
\item independent iff the inclusion maps $\id\colon A_i\to B$ mutually extend to
  an injective homomorphism $h\colon C\to B$ for some coproduct
  $C=\coprod\vec A$;
\item commuting iff the inclusion maps $\id\colon A_i\to B$ mutually extend to an
  injective homomorphism $h\colon C\to B$ for some colimit $C$
  of the diagram of all inclusions $\id\colon A_i\cap A_j\to A_i$.
\end{itemize}

Case $\card{S}=2$ of the above definition is equivalent to \cite[Def. 4.1.1]{hs}.
(Lemma~\ref{commpair} will prove this.)
However, the following examples suggest that the relationship 
between commuting pairs and commuting triples is not a simple one.
\begin{example}
  Let $B$ be freely generated by elements $g_0,g_1,g_2$ and relation $g_0\wedge g_1=0$.
  Let $A_i=\gen{\{g_j\suchthat j\neq i\}}$. Then $(A_i)_{i<3}$ commutes but $(A_i)_{i<2}$ does not.
\end{example}
\begin{example}
  Let $B$ be freely generated by elements $g_0,g_1,g_2$ and relation $\bigwedge_{i<3}g_i=0$.
  Let $A_i=\gen{\{g_i\}}$. Then $(A_i,A_j)$ is independent for all $i<j<3$
  but $(A_i)_{i<3}$ does not commute.
\end{example}
\begin{example}
  Let $B$ be freely generated by elements $g_0,g_1$; let
  \[g_2=(g_0\wedge-g_1)\vee(-g_0\wedge g_1)\] and $A_i=\gen{\{g_i\}}$ for $i<3$.
  Then $(A_i,A_j)$ is independent for all $i<j<3$ and 
  $(A_i,\gen{A_j\cup A_k})$ commutes for all $i,j,k<3$.
  Yet, $(A_i)_{i<3}$ does not commute.  
\end{example}

To define the Stone dual of a commuting tuple of Boolean subalgebras,
we first abstract from the inclusion $\id\colon A_i\to B$
to a Boolean embedding $e_i\colon A_i\to B$
and abstract from the intersection $A_i\cap A_j$ to a limit
\(\xymatrix@C=1pc{A_i&&\ar[ll]_{e_{j,i}}A_{\{i,j\}}\ar[rr]^{e_{i,j}}&&A_j}\)
of the cospan \(\xymatrix@C=1pc{A_i\ar[r]^{e_i}&B&\ar[l]_{e_j}A_j}\).
Then it is clearer how to apply Stone duality.

\begin{defn}\label{multisurjdef}
  In the category of Stone spaces and continuous maps,
  a tuple of surjective morphisms $q_i\colon Y\to X_i$ for $i\in S$
  is \emph{multisurjective} if, for every (equivalently, some) choice of colimits
  \(\xymatrix@C=1pc{X_i\ar[rr]^{q_{j,i}}&&X_{\{i,j\}}&&\ar[ll]_{q_{i,j}} X_j}\)
  of the spans \(\xymatrix@C=1pc{X_i&\ar[l]_{q_i}Y\ar[r]^{q_j}&X_j}\)\!\!,
  and for every $\vec x\in\prod\vec X$ satisfying
  $q_{j,i}(x_i)=q_{i,j}(x_j)$ for all $i,j\in S$,
  there is some $y\in Y$ satisfying $q_i(y)=x_i$ for all $i\in S$.
\end{defn}
The above definition also makes sense for the category of
compact Hausdorff spaces and continuous maps.
In both categories, a colimit as in Definition~\ref{multisurjdef}
can be constructed using the finest topologically closed equivalence relation
on $Y$ containing $\{(a,b)\suchthat\exists k\in\{i,j\}\ \ q_k(a)=q_k(b)\}$.

For technical reasons, we will also need a weakening of
commuting Boolean subalgebra tuples.
For pairs, this property is exactly Heindorf and Shapiro's
definition of commuting pairs of Boolean subalgebras. \cite[Def. 4.1.1]{hs}
For longer tuples, it is a strictly weaker property.
\begin{defn}\label{weaklycommdef}
  Given a finite tuple $\vec A$ of subalgebras
  of a Boolean algebra $B$ and a subset $S_i$ of each $A_i$:
  \begin{itemize}
  \item Say that $\vec A$ \emph{weakly commutes at} $\vec S$ if,
    for each $\vec x\in\prod\vec S$ satisfying $\bigwedge\vec x=0$,
    there exists $\vec y\in\prod_i\gen{A_i\cap\bigcup_{j\neq i}A_j}$
    such that $\bigwedge\vec y=0$ and $x_i\leq y_i$ for all $i$.
  \item Say that $\vec A$ \emph{weakly commutes} if
    $\vec A$ weakly commutes at $\vec A$.
  \end{itemize}
\end{defn}

\begin{lem}\label{weaklybase}
  Suppose that $(A_i)_{i<n}$ weakly commutes at $(S_i)_{i<n}$
  and that each $S_i$ has $\vee$-closure $T_i$.
  Then $\vec A$ weakly commutes at $\vec T$.
\end{lem}
\begin{proof}
  Suppose that $\bigwedge_{i<n}x_i=0$ where $x_i=\bigvee_{k<m_i}s_i^k$
  where $m_i<\om$ and $s_i^k\in S_i$. Then, for each $f\in\prod\vec m$,
  we have $\bigwedge_{i<n}s_i^{f(i)}=0$ and, hence, some $(t_i^f)_{i<n}$
  such that $s_i^{f(i)}\leq t_i^f\in\gen{A_i\cap\bigcup_{j\neq i}A_j}$
  and $\bigwedge_it_i^f=0$.
  Letting $y_i=\bigvee_{k<m_i}\bigwedge_{f(i)=k}t_i^f$,
  we have $x_i\leq y_i\in\gen{A_i\cap\bigcup_{j\neq i}A_j}$ and
  \[\bigwedge\vec y=
  \bigwedge_{i<n}\bigvee_{k<m_i}\bigwedge_{f(i)=k}t_i^f
  =\bigvee_{g\in\prod\vec m}\bigwedge_{i<n}\bigwedge_{f(i)=g(i)}t_i^f
  \leq\bigvee_{g\in\prod\vec m}\bigwedge_{i<n}t_i^g=0.\qedhere\]
\end{proof}

\begin{lem}\label{weakercomm}
  If a finite tuple $(A_i)_{i<n}$ of subalgebras of a Boolean algebra $B$
  commutes, then it weakly commutes.
\end{lem}
\begin{proof}
  Let $\vec x\in\prod\vec A$ and $\bigwedge\vec x=0$.
  If $x_i=0$ for some $i$, then let $y_i=0$ and $y_j=1$ for $j\neq i$.
  Then $\vec y$ is as in Definition~\ref{weaklycommdef}.
  If $x_i>0$ for all $i$, then argue as follows.
  First, seeking a contradiction, suppose that for each finite subalgebra
  $F$ of $B$ containing $\{x_i\suchthat i<n\}$ there existed
  a compatible sequence of ultrafilters $\vec U_F\in\prod_i\ult(A_i\cap F)$,
  such that $\vec x\in\prod\vec U_F$. Then, choosing a fine ultrafilter $\mc{W}$
  supported on the set of all $F$, we declare
  $$U_i=\{b\in B\suchthat\{F\suchthat b\in U_{F,i}\}\in\mc{W}\}$$
  to obtain a compatible sequence $\vec U\in\prod_i\ult(A_i)$
  such that $\vec x\in\prod\vec U$. Because $\bigwedge\vec x=0$,
  this $\vec U$ contradicts our hypothesis that $\vec A$ is commuting.

  Thus, we may choose a finite subalgebra $F$ containing each $x_i$
  such that every sequence of ultrafilters $\vec U_F\in\prod_i\ult(A_i\cap F)$
  satisfying $\vec x\in\prod\vec U_F$ is incompatible.
  Let $S_i$ be the set of atoms of $A_i\cap F$ below $x_i$, for each $i$.
  Then, for each $\vec z\in\prod\vec S$, there exist $i<j<n$ and
  $c\in A_i\cap A_j\cap F$ such that $z_i\leq c$ and $z_j\leq -c$.
  Letting $w_i=c$, $w_j=-c$, and $w_k=1$ for $k\in n\setminus\{i,j\}$
  witnesses that $\vec A$ weakly commutes at $(\{z_i\})_{i<n}$. 
  By Lemma~\ref{weaklybase}, $\vec A$ also weakly commutes at $(\{x_i\})_{i<n}$.
\end{proof}

The converse of the above lemma is true for pairs of subalgebras
but not in general.

\begin{lem}\label{commpair}
  If a pair of subalgebras $A,B$ of a Boolean algebra $C$ weakly commutes,
  then the pair commutes.
\end{lem}
\begin{proof}
  Suppose that $U\in\ult(A)$, $V\in\ult(B)$, and $(A,B)$ weakly commutes.
  Assuming also that $U\cup V$ does not extend to an ultrafilter of $C$,
  \istst\ $U$ and $V$ are incompatible. By assumption, there exist
  $x\in U$ and $y\in V$ such that $x\perp y$. By weak commuting,
  there are $x',y'\in A\cap B$ such that $x\leq x'\perp y'\geq y$.
  Since $U$ and $V$ are upward closed, we have
  $x'\in U\cap B$ and $y'\in V\cap A$. Thus, $U$ and $V$ are incompatible.
\end{proof}
\begin{example}
  Let $B$ be freely generated by elements $g_0,g_1,g_2$
  and relation $\bigwedge_{i<3}g_i=0$.
  Let $A_i=\gen{\{g_j\suchthat j\neq i\}}$ for $i<3$.
  Then $\vec A$ weakly commutes but does not commute.
\end{example}

In the above counterexample, $(A_i)_{i<2}$ does not weakly commute.
Failure of an initial segment of a tuple to weakly commute is the
only obstruction to the whole tuple commuting.

\begin{lem}\label{commstepup}
  Suppose that a Boolean algebra $B$ has subalgebras $A_0,\ldots,A_n$ such that
  \begin{enumerate}
  \item $(A_i)_{i<n}$ commutes,
  \item $(\gen{\bigcup_{i<n}A_i},A_n)$ commutes, and
  \item $A_n\cap\gen{\bigcup_{i<n}A_i}=\gen{A_n\cap\bigcup_{i<n}A_i}$.
  \end{enumerate}
  Then $(A_i)_{i<n+1}$ commutes.
\end{lem}
\begin{proof}
  Suppose that $\vec U\in\prod_{i<n+1}\ult(A_i)$ is compatible.
  We will find $W\in\ult(B)$ extending $\bigcup\vec U$.
  By (1), $\bigcup_{j<n}U_j$ extends to some $V\in\ult(C)$
  where $C=\gen{\bigcup_{j<n}A_j}$. By (2), \istst\ $(V,U_n)$ is compatible.
  Suppose $x\in U_n\cap C$. Then $x\in D$ where $D=A_n\cap C$.
  By (3), $x\in\gen{\bigcup_{j<n}(A_n\cap A_j)}$;
  let $x=\bigvee_{i<m}\bigwedge_{j<n}y_{i,j}$
  for some $m$ and $y_{i,j}\in A_n\cap A_j$.
  Since $U_n\cap C\in\ult(D)$, we have $\bigwedge_{j<n}y_{i,j}\in U_n$ for some $i$.
  But then $y_{i,j}\in U_n\cap A_j\subset U_j\subset V$.
  Therefore, $x\in V$. Thus, $U_n\cap C\subset V$
  and, hence, $(V,U_n)$ is compatible.
\end{proof}

\begin{lem}\label{commcohere}
  Suppose that $(A_i)_{i<n+1}$ weakly commutes.
  Then $\left(\gen{\bigcup_{i<n}A_i},A_n\right)$ commutes and
  \[A_n\cap\gen{\bigcup_{i<n}A_i}=\gen{A_n\cap\bigcup_{i<n}A_i}.\]
\end{lem}
\begin{proof}
  Letting $B=\gen{\bigcup_{i<n}A_i}$, suppose that $A_n\ni x\perp y\in B$ and
  $y=\bigwedge_{i<n}y_i$ where $y_i\in A_i$.
  Let $z_i=y_i$ and $z_n=x$. Then there exists
  $\vec w\in\prod_{i<n+1}\overlap$
  such that $z_i\leq w_i$ and $\bigwedge\vec w=0$.
  Therefore, $x\leq w_n\perp y$ and $w_n\in C\subset A_n\cap B$
  where $C=\gen{A_n\cap\bigcup_{i<n}A_i}$
  Thus, $(A_n,B)$ weakly commutes at $(\{x\},\{y\})$.
  Every $b\in B$ is of the form $\bigvee_{i<m}\bigwedge_{j<n}b_{i,j}$
  where $b_{i,j}\in A_j$.
  Therefore, by Lemma~\ref{weaklybase}, $(A_n,B)$ weakly commutes and, 
  by Lemma~\ref{commpair}, commutes.

  It remains to show that $A_n\cap B\subset C$. Since every $b\in B$ is of
  the form $\bigvee_{i<m}\bigwedge_{j<n}b_{i,j}$ where $b_{i,j}\in A_j$, \istst\
  if $a_i\in A_i$ for $i<n+1$ and $\bigwedge_{i<n}a_i=a_n$ then $a_n\in C$.
  By our argument in the preceding paragraph, there exists $c\in C$ such
  that $\bigwedge_{i<n}a_i\leq c\perp-a_n$. But then $a_n=c\in C$.
\end{proof}

\begin{lem}\label{wellcomm}
  If $n<\om$ and $(A_i)_{i<m}$ weakly commutes for all $m\leq n$,
  then $(A_i)_{i<m}$ commutes for all $m\leq n$.
\end{lem}
\begin{proof}
  Case $n\leq 1$ is trivial.
  To induct from $n=k\geq 1$ to $n=k+1$, apply Lemma~\ref{commcohere}
  and then Lemma~\ref{commstepup}.
\end{proof}

Next we prove two easy lemmas about interactions between
commuting sequences and unions.

\begin{lem}\label{commgroup}
  If $(A_i)_{i<n}$ commutes and $\bigcup_{i<m}s(i)=n$, then
  $\left(\gen{\bigcup_{j\in s(i)}A_j}\right)_{i<m}$ commutes.
\end{lem}
\begin{proof}
  Letting $B_i=\gen{\bigcup_{j\in s(i)}A_j}$,
  suppose that $\vec U\in\prod_{i<m}\ult(B_i)$ and
  $U_i\cap B_j=U_j\cap B_i$ for $i,j<m$.
  Choose $r\colon n\to m$ such that $k\in s(r(k))$;
  let $V_k=U_{r(k)}\cap A_k$. We then have
  $V_k\cap A_l=V_l\cap A_k$ for $k,l<n$ and
  $U_i=B_i\vee\bigwedge_{k\in s(i)}V_k$ for $i<m$.
  Let $F$ be the possibly improper filter of $C=\gen{\bigcup\vec A}$
  generated by $\bigcup\vec V$, which is also the
  possibly improper filter generated by $\bigcup\vec U$.
  By hypothesis, $\bigcup\vec V$ and, hence, $F$
  extend to some $W\in\ult(C)$.
\end{proof}

\begin{lem}\label{commcts}
  Suppose that $n<\om$, $C$ is a Boolean algebra, and, for each $i<n$,
  \begin{itemize}
  \item $\mc{A}_i$ is a set of subalgebras of $C$,
  \item $\bigcup\mc{A}_i$ is a subalgebra of $C$, and
  \item $\vec A$ commutes for each $\vec A\in\prod\vec{\mc{A}}$.  
  \end{itemize}
  Then $\vec B$ commutes where $B_i=\bigcup\mc{A}_i$.
\end{lem}
\begin{proof}
  Given $\vec U\in\prod_{i<n}\ult(B_i)$ whose union does not extend to any
  $V\in\ult(C)$, \istst\ $U_i\cap B_j\not=U_j\cap B_i$ for some $i,j<n$.
  By hypothesis, there exists $\vec x\in\prod\vec U$ such that $\bigwedge\vec x=0$.
  Choose $\vec A\in\prod\vec{\mc{A}}$ such that $\vec x\in\prod\vec A$.
  Then $\bigcup_{i<n}(U_i\cap A_i)$ does not extend to any $V\in\ult(C)$. 
  Hence, $U_i\cap A_i\cap A_j\not= U_j\cap A_j\cap A_i$ for some $i,j<n$.
\end{proof}

\subsection{The other half of the main theorem}
\begin{defn}\label{dksfndef}
  Given $A$ a Boolean algebra and $d<\om\leq\ka=\cf(\ka)$:
  \begin{itemize}
  \item 
    We say that $A$ has the \emph{$d$-ary strong $\ka$-Freese-Nation property},
    or \dksfn{d}{\ka}\ for short, if there exists a club\footnote{
      Cofinal and closed with respect to unions of increasing sequences
      of length less than $\ka$.} subset $\mc{C}$ of $[A]^{<\ka}$
    such that every $\vec{B}\in\mc{C}^d$ is a commuting tuple of subalgebras of $A$.
  \item We say that $A$ has the \dksfn{<\om}{\ka}\ if
    there exists a common $\mc{C}$ witnessing the \dksfn{d}{\ka} for all $d$.
  \end{itemize}
\end{defn}
The \dksfn{2}{\alo}\ is exactly the \emph{strong Freese-Nation property} (SFN)
introduced by Heindorf and Shapiro in \cite{hs}. Heindorf and Shapiro showed
that for Boolean algebras of size at most $\aleph_1$,
the FN, the SFN, and projectivity are all equivalent.
In \cite{msfn}, the author constructed a Boolean algebra
of size $\al_2$ with the FN but without the SFN.
To connect the \dksfn{d}{\ka} with the existence of Geschke maps,
we will also use the following variant of the Freese-Nation property.

\begin{defn}\label{mnksfndef}
  Given $A$ a Boolean algebra, ordinals $\zeta,\eta$,
  and a regular infinite cardinal $\ka$, we say that
  $A$ has the \emph{$(\zeta,\eta)$-strong $\ka$-Freese-Nation property},
  or \dksfn{\zeta,\eta}{\ka}\ for short, if there exists $\mc{C}\subset[A]^{<\ka}$
  such that $A=\bigcup\mc{C}$, every $B\in\mc{C}$ is a subalgebra of $A$,
  and $\left(\gen{\bigcup\vec{H}},\gen{\bigcup\vec{K}}\right)$ commutes
  for every $\vec{H}\in\mc{C}^\zeta$ and $\vec{K}\in\mc{C}^\eta$.
\end{defn}

\begin{lem}\label{sfngrouping}
  Let $\ka$ be a regular infinite cardinal.
  Given finite ordinals $m,n$ and a Boolean algebra with the
  \dksfn{m+n}{\ka}, the algebra also has the \dksfn{m,n}{\ka}.
  Given arbitrary ordinals $\zeta,\eta$ and a Boolean algebra with the
  \dksfn{<\om}{\ka}, the algebra also has the \dksfn{\zeta,\eta}{\ka}.
\end{lem}
\begin{proof}
  The first claim immediately follows from Lemma~\ref{commgroup}.
  For the second claim, Lemma~\ref{commgroup} yields the \dksfn{\zeta,\eta}{\ka}
  for all $\zeta,\eta<\om$; Lemma~\ref{commcts} improves this
  to \dksfn{\zeta,\eta}{\ka} for all ordinals $\zeta,\eta$.
\end{proof}

\begin{lem}\label{sfntogeschke}
  If $\tau$ is a nonzero cardinal and $A$ has the \dksfn{1,\tau}{\ka},
  then $A$ has a $(\ka,\tau)$-Geschke map.
\end{lem}
\begin{proof}
  Let $\mc{C}$ witness the \dksfn{1,\tau}{\ka}\ as in Definition~\ref{mnksfndef}.
  For each $p\in A$, let $f(p)$ be some $C\in\mc{C}$ containing $p$,
  noting that this implies $\card{f(p)}<\ka$.
  Now suppose that $a\in A$ and $B=\gen{\bigcup_{\af<\tau}E_\af}$ where
  each $E_\af$ is an $f$-closed subalgebra of $A$.
  \Istst\ $\cf(I)<\ka$ where $I=B\cap[0,a]$.
  Therefore, given $b\in I$, \istf\ $z\in I\cap f(-a)$ such that $b\leq z$.
  The set $I\cap f(-a)$ is $\vee$-closed because $f(-a)$ is a subalgebra of $A$.
  Therefore, \wma\ that $b=\bigwedge\vec e$ for some finite $\sigma\subset\tau$
  and $\vec e\in\prod_{\af\in\sigma}E_\af$.
  We then have $x_0\perp x_1$ where $x_0=\bigwedge_{\af\in\sigma}e_\af$ and $x_1=-a$.
  Letting $C_0=\gen{\bigcup_{\af\in\sigma}f(e_\af)}$ and $C_1=f(-a)$, the pair $(C_0,C_1)$
  commutes and, by Lemma~\ref{weakercomm}, weakly commutes.
  Therefore, there exist $y_0,y_1\in C_0\cap C_1$ such that
  $x_0\leq y_0\perp y_1\geq x_1$.
  Then $z=y_0$ is as desired: $z\in f(-a)$, $b=\bigwedge\vec e\leq z\leq a$,
  and $z\in\gen{\bigcup_{\af\in\sigma}f(e_\af)}\subset\gen{\bigcup\vec E}\subset B$.
\end{proof}

\begin{lem}\label{weakcommgrow}
  Let $\vn\neq\sigma\subset n<\om$ and let $D$ be a Boolean algebra with
  element $c$ and subalgebras $A_i$ for $i<n$ such that $(A_i)_{i<n}$
  weakly commutes and, for each $d\in\{\pm c\}$, the set
  $[0,d]\cap\bigcap_{i\in\sigma}A_i$ is cofinal
  in $[0,d]\cap\gen{\bigcup_{i<n}A_i}$. Then $(B_i)_{i<n}$ weakly commutes
  where $B_i$ is $\gen{A_i\cup\{c\}}$ if $i\in\sigma$ else $A_i$.
\end{lem}
\begin{proof}
  For each $i<n$, suppose that we have $b_i=a_i\wedge c_i$ where  $a_i\in A_i$,
  $c_i\in\{\pm c\}$ if $i\in\sigma$, and $c_i=1$ if $i\not\in\sigma$.
  By Lemma~\ref{weaklybase}, \istst\ $\vec B$ weakly commutes
  at $(\{b_i\})_{i<n}$. So, suppose $\bigwedge_{i<n}b_i=0$. We must find
  $\vec y\in\prod_{i<n}\gen{B_i\cap\bigcup_{j\neq i}B_j}$ such that
  $\vec y\geq\vec b$ and $\bigwedge\vec y=0$.
  If $c_i=c$ and $c_j=-c$ for some $i,j<n$, then we obtain $\vec y$ as desired
  by setting $y_i=c$, $y_j=-c$, and $y_k=1$ for $k\not\in\{i,j\}$.
  Therefore, \wma\ that $c_i=c$ for all $i\in\sigma$.
  Then $\bigwedge_{i<n}a_i\leq -c$ and, hence, there exists
  $e\in\bigcap_{i\in\sigma}A_i$ such that $\bigwedge_{i<n}a_i\leq e\leq -c$.
  Let $x_i=a_i\wedge-e$ for $i\in\sigma$ and $x_i=a_i$ for $i\not\in\sigma$.
  Then $\vec b\leq\vec x\in\prod\vec A$ and $\bigwedge\vec x=0$.
  Therefore, there exists $\vec y\in\prod_i\gen{A_i\cap\bigcup_{j\neq i}A_j}$
  such that $\vec x\leq\vec y$ and $\bigwedge\vec y=0$. Thus, we have found
  $\vec y$ witnessing that $\vec B$ weakly commutes at $(\{b_i\})_{i<n}$.
\end{proof}

\begin{lem}\label{tighttosfn}
  If $A$ is tightly $\ka$-filtered, then $A$ has the \dksfn{<\om}{\ka}.
\end{lem}
\begin{proof}
  We modify Geschke's proof of (ii)$\Rightarrow$(iii) of
  \cite[Thm. 2.5]{geschke}.
  Let $A$ be tightly $\ka$-filtered. Specifically, let
  $A$, $\ka$, and $(x_\af)_{\af<\eta}$ be as in
  Definition~\ref{deftight}. For each $T\subset\eta$, let
  $A(T)=\gen{\{x_\af\suchthat\af\in T\}}$. For each $\af<\eta$, choose
  $f(\af)\in[\af]^{<\ka}$ such that $A(f(\af))$ contains a cofinal
  subset of $[0,x_\af]\cap A(\af)$ and a cofinal subset of
  $[0,-x_\af]\cap A(\af)$. For each $S\in[\eta]^{<\ka}$, let $F(S)$
  be the $f$-closure of $S$. By induction on
  $\sup_{\af\in S}(\af+1)$, we have $\card{F(S)}<\ka$:
  if $S$ has maximum $\af$, then \[F(S)=F(S\cap\af)\cup F(f(\af))\cup\{\af\};\]
  if $S$ has a cofinal sequence $(\zeta_\bt)_{\bt<\mu}$ where $\mu$ is
  regular and $\om\leq\mu<\ka$, then $F(S)=\bigcup_\bt F(S\cap\zeta_\bt)$.
  Let $\mc{E}=\{A(F(S))\suchthat S\in[\eta]^{<\ka}\}$, which is club
  in $[A]^{<\ka}$. By Lemma~\ref{wellcomm}, \istst\ every tuple in
  $\mc{E}^{<\om}$ weakly commutes.

  Let $n<\om$ and $S_i\in[\eta]^{<\ka}$ for $i<n$. By induction on $\af$,
  we will prove that $(A(F(S_i)\cap\af))_{i<n}$ weakly commutes for all
  $\af\leq\eta$. The limit case is automatic. For the successor case,
  let $B_i=A(F(S_i)\cap\af)$ and $C_i=A(F(S_i)\cap(\af+1))$, and
  suppose that $(B_i)_{i<n}$ weakly commutes and that $\sigma\neq\vn$
  where $\sigma=\{i<n\suchthat\af\in F(S_i)\}$. Then $\bigcap_{i\in\sigma}B_i$
  contains $A(f(x_\af))$ and $A(f(x_\af))$ contains, for each $y\in\{\pm x_\af\}$,
  a cofinal subset of $[0,y]\cap A(\af)$. Since
  $\bigcap_{i\in\sigma}B_i\subset\gen{\bigcup_{i<n}B_i}\subset A(\af)$,
  the set $[0,y]\cap\bigcap_{i\in\sigma}B_i$ is cofinal in
  $[0,y]\cap\gen{\bigcup_{i<n}B_i}$. By Lemma~\ref{weakcommgrow},
  $(C_i)_{i<n}$ weakly commutes.
\end{proof}

\begin{thm}\label{tightsfnmap}
  Given a regular infinite cardinal $\ka$ and a Boolean algebra $A$, \tfae.
  \begin{enumerate}
  \item $A$ is tightly $\ka$-filtered.
  \item $A$ has the \dksfn{<\om}{\ka}.
  \item $A$ has a $(\ka,\tau)$-Geschke map for all nonzero cardinals $\tau$.
  \item $A$ has the \dksfn{d+1}{\ka}\ for all $d\in[1,\om)$ satisfying
    $\ka^{+d}\leq\card{A}$.
  \item $A$ has the \dksfn{1,d}{\ka}\ for all $d\in[1,\om)$ satisfying
    $\ka^{+d}\leq\card{A}$.
  \item $A$ has a $(\ka,d)$-Geschke map for all $d\in[1,\om)$ satisfying
    $\ka^{+d}\leq\card{A}$.
  \end{enumerate}
\end{thm}
\begin{proof}\
  \begin{enumerate}
  \item[(1)$\Rightarrow$(2)] Lemma~\ref{tighttosfn}.
  \item[(2)$\Rightarrow$(4)] Trivial.
  \item[(4)$\Rightarrow$(5)] Lemma~\ref{sfngrouping}.
  \item[(5)$\Rightarrow$(6)] Lemma~\ref{sfntogeschke}.
  \item[(2)$\Rightarrow$(3)]
    Lemma~\ref{sfngrouping} followed by Lemma~\ref{sfntogeschke}.
  \item[(3)$\Rightarrow$(6)] Trivial.
  \item[(6)$\Rightarrow$(1)] By Lemma~\ref{geschketogame},
    II has an uncoordinated winning strategy for $\mc{G}(A,\ka,d)$
    for all $d\in[1,\om)$ satisfying $\ka^{+d}\leq\card{A}$.
    Therefore, by Theorem~\ref{tightgameclub}, $A$ is tightly $\ka$-filtered.
  \end{enumerate}
\end{proof}

\begin{cor}\label{projlocfin}
  Given a Boolean algebra $A$, \tfae.
  \begin{enumerate}
  \item $A$ is projective.
  \item $A$ has a cofinal family of finite subalgebras, 
    any finite tuple of which commutes.
  \item $A$ has a cofinal family of finite subalgebras, 
    any finite tuple of which weakly commutes.
  \item $A$ has a cofinal family of finite subalgebras, 
    any $n$-tuple of which commutes
    if $2\leq n<\om$ and $\al_{n-1}\leq\card{A}$.
  \item $A$ has a cofinal family of finite subalgebras, 
    any $n$-tuple of which weakly commutes
    if $2\leq n<\om$ and $\al_{n-1}\leq\card{A}$.\qedhere
  \end{enumerate}  
\end{cor}

\begin{cor}\label{aezerolocfin}
  A topological space $X$ is Dugundji iff $X$ is an inverse limit of maps
  $(q_{\sigma,\tau}\colon m_\tau\to m_\sigma)_{\sigma\trile\tau\in D}$ where:
  \begin{itemize}
  \item $(D,\trile)$ is an upward directed poset,
  \item $m_\sigma$ is a finite discrete space for each $\sigma\in D$,
  \item $q_{\rho,\sigma}\circ q_{\sigma,\tau}=q_{\rho,\tau}$
    for each triple $\rho\trile\sigma\trile\tau$, and
  \item $(q_{\sigma_i,\tau})_{i<n}$ is multisurjective for all $n<\om$,
  $\tau\in D$, and $\sigma_0,\ldots,\sigma_{n-1}\trile\tau$.
  \end{itemize}
  The same is true if we require $n$ to also satisfy $\al_n\leq\weight{X}^+$.
\end{cor}

\begin{thm}[Main Theorem repeated]\label{mainthmrepeat}
Given a regular infinite cardinal $\ka$ and a Boolean algebra $A$, \tfae.
\begin{enumerate}
\item $A$ is tightly $\ka$-filtered.
\item For every nonzero cardinal $\tau$, $A$ has a $(\ka,\tau)$-Geschke map.
\item For every finite nonzero ordinal $\tau$ satisfying $\ka^{+\tau}\leq\card{A}$, 
$A$ has a $(\ka,\tau)$-Geschke map.
\item For every nonzero cardinal $\tau$, 
II has an uncoordinated winning strategy for $\mc{G}(A,\ka,\tau)$.
\item For every finite nonzero ordinal $\tau$ satisfying $\ka^{+\tau}\leq\card{A}$, 
II has an uncoordinated winning strategy for $\mc{G}(A,\ka,\tau)$.
\end{enumerate}
\end{thm}
\begin{proof}\
  \begin{enumerate}
  \item[(1)$\Rightarrow$(2)] Theorem~\ref{tightsfnmap}.
  \item[(2)$\Rightarrow$(3)] Trivial.
  \item[(3)$\Rightarrow$(5)] Lemma~\ref{geschketogame}.
  \item[(2)$\Rightarrow$(4)] Lemma~\ref{geschketogame}.
  \item[(4)$\Rightarrow$(5)] Trivial.
  \item[(5)$\Rightarrow$(1)] Theorem~\ref{tightgameclub}.
  \end{enumerate}
\end{proof}

\subsection{The other half of the main example}
\begin{lem}\label{hypergraphgood}
  Given $1\leq d<n<\om\leq\lm$, the algebra $C_{\lm,d,n}$ has the \dksfn{d}{\alo}.
\end{lem}
\begin{proof}
  Let $\mc{E}$ be the set of all subalgebras of $C_{\lm,d,n}$ of
  the form $E(\sigma)=\gen{\{f(\tau)\suchthat\tau\subset\sigma\}}$ where
  $\sigma\in[\lm]^{<\alo}$ and $f$ is as in Definition~\ref{hypergraphdef}.
  Then $\mc{E}$ is cofinal in $[C_{\lm,d,n}]^{<\alo}$.
  Given $\vec\sigma\in\left([\lm]^{<\alo}\right)^d$, we will show
  that $(E(\sigma_i))_{i<d}$ commutes. Seeking a contradiction, suppose that
  $\vec U\in\prod_i\ult(E(\sigma_i))$ is a sequence of compatible
  ultrafilters whose union does not extend to an ultrafilter of $C_{\lm,d,n}$.
  Letting $\Gamma_i=\{\tau\suchthat f(\tau)\in U_i\}$, we have
  $[\upsilon]^d\not\subset\Gamma_i$ for all $\upsilon\in[\sigma_i]^n$
  because each $U_i$ is a proper filter, we have
  $\Gamma_i\cap[\sigma_j]^d=\Gamma_j\cap[\sigma_i]^d$
  because $\vec U$ is compatible, and we have
  $[\rho]^d\subset\bigcup\vec\Gamma$ for some $\rho\in[\bigcup\vec\sigma]^n$
  because $\bigcup\vec U$ does not generate a proper filter.
  But then $[\rho]^d\subset\bigcup_i[\sigma_i]^d$ and,
  by Lemma~\ref{trivialcliquecover}, $\rho\subset\sigma_k$ for some $k$.
  Thus, $[\rho]^d\subset\Gamma_k$ and we have a contradiction.
\end{proof}
Note that the above lemma is trivial for $d=1$ because every Boolean
algebra trivially has the \dksfn{1}{\alo}.

\begin{thm}\label{hypergraphsame}
  Given $2\leq d<n<\om\leq\lm<\om_d$, the space $X_{\lm,d,n}$
  is homeomorphic to $2^\lm$.
\end{thm}
\begin{proof}
  By Lemma~\ref{hypergraphgood}, $C_{\lm,d,n}$ has \dksfn{d}{\alo}.
  By Corollary~\ref{projlocfin}, $C_{\lm,d,n}$ is projective
  and $X_{\lm,d,n}$ is Dugundji.
  Also, every point in $X_{\lm,d,n}$ is easily verified to
  have character $\lm$. (This is not true if we allow $d=1$.)
  Therefore, by \scepin's Theorem \cite[Thm. 9]{sctoplim},
  $C_{\lm,d,n}$ is free and $X_{\lm,d,n}\homeo 2^\lm$.\footnote{
  See \cite[Thm. 1.4.11]{hs} for an algebraic statement and proof
  of \scepin's Theorem. An essential tool is Sirota's lemma \cite{sirota},
  which says that every continuous open surjection $p\colon2^\om\to2^\om$
  with perfect fibers is isomorphic to the first coordinate projection
  from $(2^\om)^2$ to $2^\om$.}
\end{proof}

\begin{thm}\label{hypergraphdifference}
  Given $1\leq e<d<m<\om\leq\cf(\ka)=\ka<\ka^{+e}\leq\lm$ and $e<n<\om$,
  II has an uncoordinated winning strategy for $\mc{G}(C_{\lm,d,m},\ka,e)$
  but not for $\mc{G}(C_{\lm,e,n},\ka,e)$.
  In particular, the spaces $X_{\lm,d,m}$ and $X_{\lm,e,n}$ are not homeomorphic.
\end{thm}
\begin{proof}
  By Lemma~\ref{hypergraphgood}, $C_{\lm,d,m}$ has the \dksfn{d}{\alo}.
  By Lemma~\ref{sfngrouping}, $C_{\lm,d,m}$ has the \dksfn{d-1,1}{\alo}.
  By Lemma~\ref{sfntogeschke}, $C_{\lm,d,m}$ has an $(\alo,d-1)$-Geschke map,
  which is also a $(\ka,e)$-Geschke map. By Lemma~\ref{geschketogame},
  II has an uncoordinated winning strategy for $\mc{G}(C_{\lm,d,m},\ka,e)$.
  By Theorem~\ref{hypergraphbad},
  every club $\mc{F}\subset[C_{\lm,e,n}]^{\leq\ka}$ contains $\mc{B}\in[\mc{F}]^e$
  such that $\gen{\bigcup\mc{B}}\not\leq_\ka C_{\lm,e,n}$.
  By Lemma~\ref{geschketogame}, II does not have 
  an uncoordinated winning strategy for $\mc{G}(C_{\lm,e,n},\ka,e)$.
\end{proof}

For $2\leq d<e$,
I am not aware of another way to topologically distinguish
the spaces $X_{\lm,d,m}$ and $X_{\lm,e,n}$ in the above theorem.
In particular, a simple $\Delta$-system argument shows that
every regular uncountable cardinal is a caliber\footnote{
  An ordinal $\eta$ is a caliber of a space if
  for every $\eta$-sequence of nonempty open sets
  there is a point common to all sets in some $\eta$-long subsequence.}
of $X_{\mu,p,q}$ if $2\leq p<q<\om\leq\mu$.
On other hand, if $1<h<\om\leq\mu$, then $X_{\mu,1,h}$ is a scattered
space with height $h$ and cellularity $\mu$.

\begin{thm}[Main Example repeated]
Given a regular infinite cardinal $\ka$ and $0<d<\om$,
let $C_{\ka,d}$ be the clopen algebra of the space of $d$-uniform hypergraphs
on $\ka^{+d}$ that avoid copies of $[d+1]^d$.
Then $C_{\ka,d}$ has an $(\alo,d-1)$-Geschke map but II does not have 
an uncoordinated winning strategy for $\mc{G}(C_{\ka,d},\ka,d)$.
\end{thm}
\begin{proof}
  By Theorem~\ref{hypergraphdifference}, II does not have 
  an uncoordinated winning strategy for $\mc{G}(C_{\ka^{+d},d,d+1},\ka,d)$.
  The proof of that theorem also shows that $C_{\ka^{+d},d,d+1}$ has an
  $(\alo,d-1)$-Geschke map.
\end{proof}

\subsection{Purely finite consequences}\label{purefin}
In this subsection we observe strictly finitary consequences
of the preceding subsections.
First, we observe that propositional logic satisfies
a higher-arity version of the Craig interpolation property.

\begin{thm}
  Suppose that $n<\om$ and $\bigwedge_{i<n}\varphi_i\proves\bot$
  where each $\varphi_i$ is a formula of propositional logic.
  Then there exists formulas $\psi_i$ for $i<n$ such that
  $\varphi_i\proves\psi_i$, $\bigwedge_{i<n}\psi_i\proves\bot$, 
  and, for each propositional variable $v$ of $\psi_i$,
  there exists $j\neq i$ such that $v$ is common to $\varphi_i$ and $\varphi_j$.
\end{thm}
\begin{proof}
  For each $i$, let $S_i$ be the set of propositional variables of $\varphi_i$.
  For each propositional variable $v\in\bigcup\vec S$,
  let $A_v$ be the Boolean algebra $\{\top,\bot,v,\neg v\}$.
  The sequence $(A_v\suchthat v\in\bigcup\vec S)$ is independent.
  Therefore, by Lemma~\ref{commgroup}, $(\gen{\bigcup_{v\in S_i}A_v})_{i<n}$
  is commuting and, by Lemma~\ref{weakercomm}, weakly commuting.
  Thus, there exists $\vec\psi$ as desired.
\end{proof}

Second, we observe that the nontrivial symmetric power functors $\spowop^n$
for $n\geq 2$ and the Vietoris hyperspace functor $\Exp$ fail
to preserve multisurjectivity of triples of morphisms in the category
of finite sets and finite functions.
Though the proof given here involves $2^{\om_2}$,
a strictly finitary proof is presumably feasible.
However, for me to even conjecture the following theorem
required the motivation of the preceding subsections.

\begin{defn}\
  In the category $\mb{K}$ of compact Hausdorff spaces and continuous maps,
  and also in the full subcategories
  $\mb{K}_0$ of Stone spaces and continuous maps
  and $\mb{F}$ of finite spaces with finite functions,
  define the following endofunctors.
  \begin{itemize}
  \item $\spowop^n(X)$ is $X^n/{\sim}$ where $p\sim q$ iff $p=q\circ\sigma$
    for some permutation of $n$ (and $n$ is finite).
  \item Given $f\colon X\to Y$, define
    $\spowop^n(f)\colon\spowop^n(X)\to\spowop^n(Y)$ by
    \[f((p_i)_{i<n}/{\sim})=(f(p_i))_{i<n}/{\sim}.\]
  \item $\Exp(X)$ is the space of all nonempty closed subsets of $X$
    with the topology generated by sets of the form
    $\{K\suchthat K\cap U\neq\vn\}$ or
    $\{K\suchthat K\subset U\}$ for $U$ open in $X$.
  \item Given $f\colon X\to Y$,
    define $\Exp(f)\colon\Exp(X)\to\Exp(Y)$ by \[\Exp(f)(K)=f[K].\]
  \end{itemize}
\end{defn}

\begin{thm}\label{badmultisurjfunctor}
  If $\Phi$ is $\Exp$ or $\spowop^n$ for some $n\geq 2$, then there
  exists a triple of finite functions $(h_i)_{i<3}$ such that
  $(h_i)_{i\in\sigma}$ is multisurjective for all $\sigma\subset 3$
  but $(\Phi(h_i))_{i<3}$ is not multisurjective.
\end{thm}
\begin{proof}
  There is a sequence of finite functions
  $(q_{\sigma,\tau}\colon m_\tau\to m_\sigma)_{\sigma\trile\tau\in D}$
  as in Corollary~\ref{aezerolocfin} with inverse limit $2^{\om_2}$.
  (Indeed, we can take $D=[\om_2]^{<\alo}$, ${\trile}={\subset}$,
  $m_\sigma=2^\sigma$, and $q_{\sigma,\tau}(x)=x\restrict\sigma$.)
  In particular, any finite tuple of these maps of the form
  $(q_{\sigma_i,\tau})_{i<m}$ is multisurjective.
  If $\Phi$ preserved multisurjectivity of triples of finite functions then,
  applying Corollary~\ref{aezerolocfin} again,
  $\Phi(2^{\om_2})$ would be Dugundji.
  But \scepin\ proved that $\Phi(2^{\om_2})$ is not Dugundji.~\cite{scfunct}
  (Shapiro had earlier proved that $\Exp(2^{\om_2})$
  is not even dyadic.~\cite{shapiro}
  See also \cite[Ch. 3]{hs} for an algebraic statement
  and proof of the fact that $\Exp(2^{\om_2})$ and $\spowop^2(2^{\om_2})$
  are not Dugundji.)
\end{proof}
For $\Phi=\spowop^2$, Edward Estrada verified that the
$(h_i)_{i<3}$ of Theorem~\ref{badmultisurjfunctor}
can be extremely simple, with $h_i\colon2^3\to2^{3\setminus\{i\}}$
and $h_i(p)=(p(j))_{j\neq i}$. \cite{estrada}

\begin{cor}\label{finboolfunctor}
  If $\Phi$ is $\Exp$ or $\spowop^n$ for some $n\geq 2$
  in the category $\mb{K}_0$, and $\Psi$ is the Stone dual of $\Phi$,
  then there is finite Boolean algebra $B$ with a triple of
  commuting subalgebras $(A_i)_{i<3}$ such that the algebras
  $(\Psi(A_i))_{i<3}$ do not weakly commute when naturally identified
  with subalgebras of $\Psi(B)$.
\end{cor}
\begin{proof}
  Applying Stone duality to Theorem~\ref{badmultisurjfunctor},
  we obtain a finite Boolean algebra $B$ and subalgebras $(A_i)_{i<3}$
  such that $(A_i)_{i<2}$ commutes and $(A_i)_{i<3}$ commutes
  but $(\Psi(A_i))_{i<3}$ does not commute. Therefore,
  by Lemma~\ref{wellcomm}, $(\Psi(A_i))_{i<2}$ does not weakly commute  
  or $(\Psi(A_i))_{i<3}$ does not weakly commute.
  The functor $\Psi$ preserves commuting pairs of subalgebras.
  This is proved topologically in \cite{scfunct}.
  (An algebraic proof for $\Exp$ and $\spowop^2$ is given
  in \cite[Ch. 3]{hs}; the argument for $\spowop^2$
  is Observation (2) of Section 3.4, which easily generalizes to $\spowop^n$.)
  Therefore, $(\Psi(A_i))_{i<3}$ does not weakly commute.
\end{proof}
For $\Phi=\Exp$, Ren\'e Montemayor found that the finite Boolean algebra
with four atoms is the smallest $B$ for which some $\vec A$
witnesses Corollary~\ref{finboolfunctor}. \cite{montemayor}

\section{Questions}\label{secquestions}

\begin{question}
  Is the space of triangle-free graphs on $\om_2$ homeomorphic
  to the space of $K_4$-free graphs on $\om_2$?
  That is, is $X_{\om_2,2,3}\homeo X_{\om_2,2,4}$?
\end{question}

\begin{question}
  More generally, given $d,d'<\om$ and finite families
  $\mc{F},\mc{F}'\in[[\om]^{d}]^{<\alo}$ of forbidden induced subhypergraphs,
  what is the least infinite cardinal $\ka$ (if any) for which
  the space of all $\mc{F}$-free $d$-regular hypergraphs on $\ka$
  is not homeomorphic to
  the space of all $\mc{F}'$-free $d'$-regular hypergraphs on $\ka$?
\end{question}

\begin{question}
  Is $X_{\om_3,3,4}$ dyadic? More generally,
  is there a Boolean algebra $A$ such that
  II has an uncoordinated winning strategy for $\mc{G}(A,\alo,2)$
  but $A$ does not extend to a free Boolean algebra?
  Any such $A$ must have size at least $\al_3$.
\end{question}

\begin{question}
  Say that a Boolean algebra $A$ has the \dkfn{d}{\ka}
  if II has an uncoordinated winning strategy for $\mc{G}(A,\ka,\tau)$
  for all $\tau<d$. Does the \dkfn{3}{\alo} imply the \dksfn{3}{\alo}?
  Any counterexample must have size at least $\al_3$.
  (In~\cite{msfn} it was shown that
  \dkfn{2}{\alo}$\not\Rightarrow$\dksfn{2}{\alo}, and
  this paper's results immediately imply that
  \dksfn{d}{\ka}$\Rightarrow$\dkfn{d}{\ka},
  \dkfn{\om}{\ka}$\Leftrightarrow$\dksfn{<\om}{\ka}, and
  \dksfn{d}{\alo}$\not\Rightarrow$\dkfn{d+1}{\ka}.)
\end{question}

\end{document}